\DeclareMathSymbol{\twoheadrightarrow} {\mathrel}{AMSa}{"10}
\def\Q{{\mathbf Q}}
\def\Z{{\mathbf Z}}
\def\C{{\mathbf C}}
\def\F{{\mathbf F}}
\def\H{{\mathrm H}}
\def\ST{{\mathbf S}}
\def\A{{\mathbf A}}
\def\Sn{{\mathbf S}_n}
\def\Gal{\mathrm{Gal}}
\def\Perm{\mathrm{Perm}}
\def\End{\mathrm{End}}
\def\Aut{\mathrm{Aut}}
\def\Hom{\mathrm{Hom}}
    \def\RR{\mathfrak{R}}
\def\fchar{\mathrm{char}}
\def\GL{\mathrm{GL}}
\def\Sp{\mathrm{Sp}}
        \def\K{\bar{K}}
\def\dim{\mathrm{dim}}
                           \def\Hdg{\mathrm{Hdg}}
\newtheorem{thm}{Theorem}[section]
\newtheorem{lem}[thm]{Lemma}
\newtheorem{cor}[thm]{Corollary}
\theoremstyle{definition}
\newtheorem{defn}[thm]{Definition}
\newtheorem{ex}[thm]{Example}
\newtheorem{exs}[thm]{Examples}
\newtheorem{rem}[thm]{Remark}
\title[Hodge classes on certain hyperelliptic prymians]{Hodge classes on certain hyperelliptic prymians}
\author{Yuri G. Zarhin}
\address{Department of Mathematics, Pennsylvania
State University, University Park, PA 16802, USA}
\email{zarhin\char`\@math.psu.edu}
\begin{document}
\maketitle

\section{Definitions and statements}

 Throughout this paper
$K$ is a field, $\K$ its algebraic closure and $\Gal(K)=\Aut(\K/K)$
the absolute Galois group of $K$.

 If $X$ is an abelian variety over
$\K$ then we write $\End(X)$ for the ring of all its
$\K$-endomorphisms
; the notation $1_X$ stands for the
identity automorphism of $X$. If $Y$ is an abelian variety over $\K$
then we write $\Hom(X,Y)$ for the corresponding group of all
$\K$-homomorphisms.

Let $f(x)\in K[x]$ be a polynomial of  degree $n\ge 2$ with
coefficients in $K$ and without multiple roots, $\RR_f\subset \K_a$
the ($n$-element) set of roots of $f$ and $K(\RR_f)\subset \K_a$ the
splitting field of $f$. We write $\Gal(f)=\Gal(f/K)$ for the Galois
group $\Gal(K(\RR_f)/K)$ and call it the Galois group of $f(x)$ over
$K$; it permutes roots of $f$ and may be viewed as a certain
permutation group of $\RR_f$, i.e., as  a subgroup of the group
$\Perm(\RR_f)\cong\Sn$ of permutation of $\RR_f$. (It is well known
that $\Gal(f)$ is transitive if and only if $f$ is irreducible.) Let
us put
$$g=\left[\frac{n-1}{2}\right].$$ Clearly, $g$ is a
nonnegative integer and either $n=2g+1$ or $n=2g+2$.

Let us assume that $\fchar(K)\ne 2$. We write $C_{f}$ for the genus
$g$ hyperelliptic $K$-curve $y^2=f(x)$ and $J(C_{f})$ for its
jacobian. Clearly, $J(C_{f})$ is a $g$-dimensional abelian variety
that is defined over $K$. In particular, $J(C_{f})=\{0\}$ if and
only if $n=2$. The abelian variety $J(C_{f})$ is an elliptic curve
if and only if $n=4$.

Let us assume that $K$ is a subfield of the field $\C$ of complex
numbers (and $\bar{K}$ is the algebraic closure of $K$ in $\C$).
Then one may view $J(C_{f})$ as a complex abelian variety and
consider its first rational homology group $\H_1(J(C_{f}),\Q)$ and
the Hodge group $\Hdg(J(C_{f}))$ of $J(C_{f})$, which is a certain
connected reductive algebraic $\Q$-subgroup of the general linear
group $\GL(\H_1(J(C_{f}),\Q))$
\cite{MumfordSh,Deligne,Ribet,ZarhinIzv,MZ2}. The canonical
principal polarization on $J(C_{f})$ gives rise to the nondegenerate
alternating bilinear form
$$\H_1(J(C_{f}),\Q) \times \H_1(J(C_{f}),\Q) \to \Q$$
and the corresponding symplectic group $\Sp( \H_1(J(C_{f}),\Q))$
contains $\Hdg(J(C_{f}))$ as a (closed) algebraic $\Q$-subgroup. In
addition, $\End(J(C_{f}))$ coincides with the endomorphism ring of
the complex abelian variety $J(C_{f})$ and $\End^{0}(J(C_{f}))$
coincides with the centralizer of $\Hdg(J(C_{f}))$ in
$\End_{\Q}(\H_1(J(C_{f}),\Q))$ \cite{MumfordSh,Ribet,MZ2}.

The following result was obtained by the author in \cite[Th.
2.1]{ZarhinMRL}, \cite[Sect. 10]{ZarhinMMJ}. (See also
\cite{ZarhinPLMS2}, \cite{ZarhinBSMF,ZarhinL}.)

\begin{thm}
\label{jacobian} Suppose that $K\subset \C$, $n \ge 5$ (i.e., $g \ge
2$) and $\Gal(f)=\ST_n$ or the alternating group $\A_n$. Then
$\End(J(C_{f}))=\Z$ and $\Hdg(J(C_{f}))=\Sp( \H_1(J(C_{f}),\Q))$.
Every Hodge class on each self-product of $J(C_{f})$ can be
presented as a linear combination of products of divisor classes. In
particular, the Hodge conjecture is valid for each self-product of
$J(C_{f})$.
\end{thm}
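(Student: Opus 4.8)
The plan is to take the first two assertions—$\End(J(C_{f})) = \Z$ and $\Hdg(J(C_{f})) = \Sp(\H_1(J(C_{f}),\Q))$—directly from the cited results, and to derive from the equality $\Hdg = \Sp$ the statement that every Hodge class on a self-product is a linear combination of products of divisor classes. The guiding idea is to translate ``Hodge class'' into ``invariant of the symplectic group'' and then to apply classical invariant theory.

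First I would set $W = \H_1(J(C_{f}),\Q)$, a $2g$-dimensional $\Q$-space carrying the nondegenerate alternating form $\omega$ attached to the principal polarization, so that $\Sp(W)$ is the full symplectic group of $(W,\omega)$. For $J^m := J(C_{f})^m$ one has $\H_1(J^m,\Q) = W^{\oplus m}$, with Hodge group the diagonal image of $\Hdg(J(C_{f})) = \Sp(W)$; the rational cohomology is the exterior algebra $\H^*(J^m,\Q) = \wedge^*\big((W^*)^{\oplus m}\big)$. The standard description of Hodge classes through the Hodge group identifies the space of Hodge classes in $\H^{2p}(J^m,\Q)$ with the $\Sp(W)$-invariants in $\wedge^{2p}\big((W^*)^{\oplus m}\big)$. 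Because $(W,\omega)$ is symplectic, $W \cong W^*$ as $\Sp(W)$-modules, so it is enough to analyze the invariant subalgebra $\big(\wedge^*(W^{\oplus m})\big)^{\Sp(W)}$.

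Next I would invoke the First Fundamental Theorem of invariant theory for the symplectic group: every $\Sp(W)$-invariant tensor on $W$ is built from the form $\omega$ by contractions, and in particular the invariant subalgebra of $\wedge^*(W^{\oplus m})$ is generated as an algebra by its degree-$2$ part. That part, $\big(\wedge^2(W^{\oplus m})\big)^{\Sp(W)}$, is spanned by the diagonal classes $\omega_i$ (the form carried by the $i$-th copy $W_i$) together with the mixed pairing classes $\omega_{ij} \in W_i \wedge W_j$ for $i<j$, whose one-dimensionality for each pair follows from the irreducibility of $W$ and Schur's lemma. Each such degree-$2$ invariant is a Hodge class of type $(1,1)$ in $\H^2(J^m,\Q)$, and by the Lefschetz theorem on $(1,1)$-classes every such class is a divisor class: $\omega_i$ is a multiple of the theta-divisor class pulled back from the $i$-th factor, while $\omega_{ij}$ is a divisor class on $J(C_{f})\times J(C_{f})$ (for instance the class $m^*\theta - \pr_1^*\theta - \pr_2^*\theta$ built from the addition and projection maps) pulled back to $J^m$.

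Combining these steps, every Hodge class on $J^m$ then lies in the subalgebra of $\H^*(J^m,\Q)$ generated by divisor classes, hence is a $\Q$-linear combination of products of divisor classes; since divisor classes are algebraic and products of algebraic cycle classes remain algebraic, the Hodge conjecture follows for each self-product. The hard part will be the invariant-theoretic reduction to degree $2$: one must check that the First Fundamental Theorem, usually phrased for the tensor algebra, indeed forces generation in degree $2$ for the \emph{exterior} algebra $\wedge^*(W^{\oplus m})$, so that no unexpected invariants arise beyond those assembled from the $\omega_i$ and $\omega_{ij}$. Everything else—the description of Hodge classes as $\Sp(W)$-invariants, the Lefschetz $(1,1)$-theorem, and the algebraicity of products of divisors—is routine once the equality $\Hdg = \Sp$ is available.
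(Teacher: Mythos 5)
Your proposal is mathematically sound, but it is worth noting that the paper itself offers no proof of Theorem \ref{jacobian}: the statement is quoted wholesale from \cite{ZarhinMRL} and \cite{ZarhinMMJ}, and the remark that follows it only explains how the equality $\Hdg(J(C_{f}))=\Sp(\H_1(J(C_{f}),\Q))$ is extracted from the Mumford--Tate Lie algebra computed there. What you have written out is the standard deduction of the divisor-class statement (property (D)) from that equality, which is precisely the mechanism the cited sources and \cite{hazamaT,murty,MZ2} rely on, so your route is compatible with the paper's; it simply makes explicit a step the paper delegates to references. The point you flag as the potential gap does close: by the first fundamental theorem for $\Sp(W)$ the invariants of $(W^{\oplus m})^{\otimes 2p}$ are spanned by complete contractions against $\omega$, and since $\wedge^{2p}(W^{\oplus m})$ is an $\Sp(W)$-equivariant direct summand of that tensor power, applying the antisymmetrizer to a complete contraction produces exactly a product, in the exterior algebra, of the degree-two invariants $\omega_i$ and $\omega_{ij}$; hence the invariant subalgebra is generated in degree $2$ and no unexpected invariants arise. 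Combined with the Lefschetz $(1,1)$ theorem and the algebraicity of products of divisor classes, this completes the argument. The only caveat is that the first two assertions ($\End=\Z$ and $\Hdg=\Sp$) carry essentially all of the arithmetic content and must genuinely be imported from \cite{ZarhinMRL} and \cite{ZarhinMMJ}, as you do.
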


\begin{rem}
The assertion that $\Hdg(J(C_{f}))=\Sp( \H_1(J(C_{f}),\Q))$ was not
stated explicitly in \cite{ZarhinMMJ}. However, it follows
immediately from the description of the Lie algebra $\mathrm{mt}$ of
the corresponding Mumford-Tate group \cite[p. 429]{ZarhinMMJ} as the
direct sum of the scalars $\Q \mathrm{Id}$ and the Lie algebra of
the symplectic group, because the Lie algebra of the Hodge group
coincides with the intersection of Lie algebras of the Mumford-Tate
group and the symplectic group. (The same arguments prove the
equality $\Hdg(J(C_{f}))=\Sp( \H_1(J(C_{f}),\Q))$ for all $f(x)$
that satisfy the conditions of Theorem 10.1 of \cite{ZarhinMMJ}.)
\end{rem}

Our next result that was obtained in \cite[Th. 1.2 and Theorem
2.5]{ZarhinSh} deals with homomorphisms of hyperelliptic jacobians.

\begin{thm}
\label{homo} Suppose that $\fchar(K)\ne 2$, $n\ge 3$ and $m\ge 3$
are integers and let $f(x)$ and $h(x)$ be irreducible polynomials
over $K$ of degree $n$ and $m$ respectively. Suppose that
$$\Gal(f)=\ST_n, \ \Gal(h)=\ST_m$$
and the corresponding splitting fields $K(\RR_f)$ and $K(\RR_h)$ are
linearly disjoint over $K$. Then either
$$\Hom(J(C_f),J(C_h))=\{0\}, \ \Hom(J(C_h),J(C_f))=\{0\}$$
or $\fchar(K)>0$ and both $J(C_f)$ and $J(C_h)$ are supersingular
abelian varieties.
\end{thm}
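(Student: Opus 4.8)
The plan is to prove the essentially equivalent statement: if $\Hom(J(C_f),J(C_h))\neq\{0\}$ (the reverse direction being symmetric), then $\fchar(K)>0$ and both jacobians are supersingular. Write $X=J(C_f)$, $Y=J(C_h)$, so that $\dim X=g_f=[(n-1)/2]$ and $\dim Y=g_h=[(m-1)/2]$, and work with the prime $\ell=2$, which is admissible since $\fchar(K)\neq 2$. The starting observation is that a $\K$-homomorphism killing $X[2]$ factors through multiplication by $2$ (here $2\neq\fchar(K)$ is used), so reduction modulo $2$ yields an injection of $\Gal(K)$-modules
\[
\Hom(X,Y)\otimes\F_2\;\hookrightarrow\;\Hom_{\F_2}(X[2],Y[2]),
\]
where $\Gal(K)$ acts on the source by conjugation of homomorphisms and on the target functorially from its action on the two $2$-torsion groups. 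The entire argument then turns on understanding the target as a Galois module.

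Set $U=X[2]$ and $V=Y[2]$. The $2$-torsion of a hyperelliptic jacobian is spanned by differences of Weierstrass classes, so the $\Gal(K)$-action on $U$ (resp.\ $V$) factors through $\Gal(K(\RR_f)/K)=\Gal(f)=\ST_n$ (resp.\ through $\Gal(K(\RR_h)/K)=\Gal(h)=\ST_m$). Because $K(\RR_f)$ and $K(\RR_h)$ are linearly disjoint over $K$, the action on $U\oplus V$ factors through the full product $\ST_n\times\ST_m$, with $\ST_m$ acting trivially on $U$ and $\ST_n$ acting trivially on $V$. The decisive representation-theoretic input, supplied by \cite{ZarhinSh}, is that $U$ and $V$ are \emph{absolutely} irreducible $\F_2[\ST_n]$- and $\F_2[\ST_m]$-modules. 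Consequently $\Hom_{\F_2}(U,V)\cong U^{*}\boxtimes V$ is an external tensor product of absolutely irreducible modules, hence an irreducible $\F_2[\ST_n\times\ST_m]$-module, i.e.\ an irreducible $\Gal(K)$-module of dimension $(2g_f)(2g_h)$.

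Now suppose $\Hom(X,Y)\neq\{0\}$; then $M:=\Hom(X,Y)\otimes\F_2$ is a nonzero $\Gal(K)$-submodule of the irreducible module $\Hom_{\F_2}(U,V)$, so $M$ equals all of it. Since $\Hom(X,Y)$ is free over $\Z$, comparison of dimensions gives
\[
\rank_{\Z}\Hom(X,Y)=\dim_{\F_2}M=(2\dim X)(2\dim Y).
\]
On the other hand, the injection $\Hom(X,Y)\otimes\Z_\ell\hookrightarrow\Hom_{\Z_\ell}(T_\ell X,T_\ell Y)$ furnishes the universal bound $\rank_{\Z}\Hom(X,Y)\le(2\dim X)(2\dim Y)$, so equality holds, which forces $\Hom(X,Y)\otimes\Q_\ell=\Hom_{\Q_\ell}(V_\ell X,V_\ell Y)$. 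Finally I would invoke the classical fact that this maximality is equivalent to $X\times Y$ being supersingular: since every $\Q_\ell$-linear map is now realized by a homomorphism, compositions of such realize all of $\End_{\Q_\ell}(V_\ell X)$ and $\End_{\Q_\ell}(V_\ell Y)$, whence $\End^0(X)\otimes\Q_\ell=\End_{\Q_\ell}(V_\ell X)$ and likewise for $Y$; decomposing up to isogeny and using $\dim_{\Q}\End^0(A)\le(2\dim A)^2$ with equality only for powers of a supersingular elliptic curve, one concludes $X\sim E^{g_f}$ and $Y\sim E^{g_h}$ for a single supersingular elliptic curve $E$. As such curves exist only in positive characteristic, this also yields $\fchar(K)>0$; exchanging the roles of $f$ and $h$ disposes of the reverse Hom-group.

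The main obstacle is the representation-theoretic input that $J(C_f)[2]$ is \emph{absolutely} irreducible over $\F_2[\ST_n]$, not merely irreducible: absolute irreducibility is precisely what makes the external tensor product $U^{*}\boxtimes V$ irreducible, and this irreducibility is indispensable for the descent from $K$-rational to $\K$-rational homomorphisms encoded in the Galois module $M$. The module in question is the ``heart'' of the natural permutation representation of $\ST_n$ over $\F_2$ (of dimension $n-1$ or $n-2$ according to the parity of $n$), and establishing its absolute irreducibility for all $n\ge 3$, together with the identification $J(C_f)[2]\cong$ this heart, is the technical core on which the whole proof rests.
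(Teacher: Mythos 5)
The paper does not actually prove Theorem \ref{homo}: it is imported from \cite{ZarhinSh} (Theorems 1.2 and 2.5 there), so there is no in-paper argument to compare against. Your reconstruction --- injecting $\Hom(X,Y)\otimes\F_2$ into $\Hom_{\F_2}(X[2],Y[2])$, identifying the latter as an irreducible module over $\ST_n\times\ST_m$ via absolute irreducibility of the two hearts plus linear disjointness, and then running the all-or-nothing dichotomy with the maximal-rank case forcing supersingularity --- is essentially the argument of that reference and is correct in outline.
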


\begin{rem}
\label{homoC} If $K\subset \C$ then  Theorem \ref{homo} implies that
(under its assumptions) there are no nonzero homomorphisms between
complex abelian varieties $J(C_f)$ and $J(C_h)$.
\end{rem}

The main results of the present  paper are the following statements.

\begin{thm}
\label{main}
 Suppose that  $n=2g+2=\deg(f)\ge 8$. Let
$\tilde{C}_f \to C_{f}$ be an unramified double cover of complex
smooth projective irreducible curves and let $P$ be the
corresponding Prym variety, which is a $(g-1)$-dimensional
(principally polarized) complex abelian variety.

If $\Gal(f)=\ST_n$
 then:

\begin{itemize}
\item
$\End(P)=\Z$ or $\Z\oplus \Z$.
\item
Every Hodge class on each self-product of $P$ can be presented as a
linear combination of products of divisor classes. In particular,
the Hodge conjecture holds true for each self-product of $P$.
\end{itemize}
\end{thm}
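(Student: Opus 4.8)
The plan is to understand the Prym variety $P$ as a direct factor (up to isogeny) of the jacobian $J(\tilde{C}_f)$ and to exploit the known complete determination of $\End(J(C_f))$ and $\Hdg(J(C_f))$ from Theorem \ref{jacobian}. The Prym construction attaches to the unramified double cover $\tilde{C}_f \to C_f$ an involution $\iota$ on $J(\tilde{C}_f)$, and $J(\tilde{C}_f)$ is isogenous to $J(C_f) \times P$, where $J(C_f)$ is the $(+1)$-eigenpart and $P$ the $(-1)$-eigenpart of $\iota$. The double cover is classified by a nonzero $2$-torsion point of $J(C_f)$, equivalently by an unramified quadratic extension of the function field of $C_f$; since $n = 2g+2$ is even, $C_f$ has an even number of Weierstrass points and such covers exist. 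The first thing I would do is write down explicitly the homology $\H_1(P,\Q)$ as a symplectic $\Q$-vector space of dimension $2(g-1)$ and identify the action of $\Gal(f) = \ST_n$ on it coming from the natural action on the $2$-torsion and the Weierstrass points of $C_f$.

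The key step is to compute the Hodge group $\Hdg(P)$. Since $P$ is a principally polarized abelian variety of dimension $g-1$, we have $\Hdg(P) \subseteq \Sp(\H_1(P,\Q))$, and the goal is to show this is either an equality or very close to it, with the possible discrepancy accounting for the two cases $\End(P) = \Z$ or $\Z \oplus \Z$. I would pass to the Lie algebra level, as in the Remark following Theorem \ref{jacobian}, and analyze the representation of $\ST_n$ (or of its image in the relevant symplectic group) on $\H_1(P,\Q)$. The relevant $\ST_n$-module is the reduction of the standard permutation module: concretely, $\H_1(J(C_f),\F_2)$ is built from the permutation module $\F_2^{\RR_f}$ modulo the all-ones vector and the sum-zero condition, and the cover corresponds to choosing an even-cardinality subset of $\RR_f$; the minus-eigenspace $\H_1(P,\Q)$ then carries an induced $\ST_n$- (or index-two subgroup) representation. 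The strategy is to show this representation is absolutely irreducible, or splits into at most two absolutely irreducible pieces, forcing the endomorphism algebra $\End^0(P)$ to be either $\Q$ or $\Q \oplus \Q$ (whence $\End(P) = \Z$ or $\Z\oplus\Z$), and simultaneously forcing the connected reductive group $\Hdg(P)$ to be as large as possible, i.e. the full symplectic group (or a product of two symplectic/unitary factors) compatible with the endomorphism structure.

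Once $\Hdg(P)$ is pinned down as a symplectic group (or a product accounted for by the endomorphisms), the Hodge-theoretic conclusion follows from the general principle already invoked for Theorem \ref{jacobian}: when the Hodge group of a principally polarized abelian variety is the full symplectic group of its homology, the ring of Hodge classes on all self-products is generated by divisor classes (the invariants of the symplectic group in tensor powers of the standard representation are spanned by products of the polarization form, which is a divisor class), so the Hodge conjecture holds for every self-product. In the split case $\End^0(P) = \Q\oplus\Q$, I would correspondingly decompose $P$ up to isogeny and check that the Hodge ring is still generated by divisor classes on each factor and their products, again because each factor's Hodge group is a full symplectic (or unitary) group whose invariant theory is governed by divisor classes.

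The hard part will be the precise determination of $\Hdg(P)$, and in particular ruling out intermediate subgroups and controlling the two-dimensionality of $\End^0(P)$. The $\ST_n$-representation on $\H_1(P,\Q)$ is more delicate than the full permutation representation governing $J(C_f)$, because the Prym construction depends on the choice of the $2$-torsion data and the induced module need not be irreducible; the appearance of the alternative $\Z \oplus \Z$ signals exactly that $\H_1(P,\Q)$ can split into two Galois-stable pieces. I expect the main obstacle to be a careful modular (characteristic $2$) representation-theoretic analysis of the relevant $\ST_n$-module, combined with showing that the image of $\Gal(f)$ (or of the corresponding monodromy) is large enough to force the Hodge group down to precisely the symplectic-type group compatible with whichever endomorphism algebra occurs, rather than some proper reductive subgroup. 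The hypothesis $n \ge 8$ is presumably what guarantees that $\ST_n$ acts with sufficiently large image to carry out this pinning-down argument.
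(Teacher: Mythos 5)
Your proposal correctly identifies $P$ as the anti-invariant part of $J(\tilde{C}_f)$ under the involution attached to a nonzero $2$-torsion point of $J(C_f)$, but it stops at the point where the actual work begins, and the route you sketch for that work would not go through. The decisive ingredient of the paper's argument, which you do not mention, is the Mumford--Dalaljan description of hyperelliptic prymians: for $n=2g+2$ even, the unramified double covers of $C_f$ correspond to partitions $\RR_f=\RR_1\sqcup\RR_2$ into non-empty subsets of even cardinalities $n_1,n_2$, and the Prym is \emph{isomorphic as a (polarized) abelian variety to the product} $J(C_{f_1})\times J(C_{f_2})$ of two smaller hyperelliptic jacobians, where $f_i=\prod_{\alpha\in\RR_i}(x-\alpha)$. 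This reduces the theorem to statements about hyperelliptic jacobians that are already available: a Galois-theoretic lemma produces an intermediate field $E$ over which $f_1$ and $f_2$ have full symmetric Galois groups with linearly disjoint splitting fields, whence $\Hom(J(C_{f_1}),J(C_{f_2}))=0$ by the author's earlier homomorphism theorem; each factor with $n_i\ge 6$ has $\End=\Z$ and full symplectic Hodge group by Theorem \ref{jacobian}; the case $n_i=4$ (an elliptic curve) needs a separate $j$-invariant argument exploiting the simplicity of $\A_n$; and finally Hazama's theorem on products of abelian varieties with trivial endomorphisms and no mutual homomorphisms yields $\End(P)=\Z$ or $\Z\oplus\Z$ and property (D). The dichotomy in the endomorphism ring is simply whether $n_2=2$ (so $J(C_{f_2})=0$) or $n_2\ge 4$.

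By contrast, your plan to determine $\Hdg(P)$ directly by ``analyzing the representation of $\ST_n$ on $\H_1(P,\Q)$'' has a foundational problem: the Galois group $\Gal(f)$ does not act $\Q$-linearly on the singular homology of the complex abelian variety $P$; it acts on torsion points (equivalently on $\ell$-adic or mod-$2$ homology), and converting largeness of that action into the statement $\Hdg=\Sp$ is precisely the hard content of the author's earlier papers on jacobians --- it is not a consequence of irreducibility alone, since an absolutely irreducible symplectic module admits many proper reductive subgroups of $\Sp$ acting irreducibly. You acknowledge that ``the precise determination of $\Hdg(P)$'' is the hard part and defer it to an unexecuted modular representation-theoretic analysis, so even on its own terms the proposal is a program rather than a proof. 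Without the product decomposition of $P$ into hyperelliptic jacobians (or an equivalent structural input), the argument as written cannot be completed.
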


\begin{thm}
\label{main1}
 Suppose that  $n=2g+2\ge 10$, $K\subset\C$ and
$f(x)=(x-a)h(x)$ where $a\in K$ and $h(x)\in K[x]$ is an irreducible
degree $n-1$ polynomial with $\Gal(h)=\ST_{n-1}$.
  Let
$\tilde{C}_{f} \to C_{f}$ be an unramified double cover of complex
smooth projective irreducible curves and let $P$ be the
corresponding Prym variety, which is a $(g-1)$-dimensional
(principally polarized) complex abelian variety.

 Then:
\begin{itemize}
\item
$\End(P)=\Z$ or $\Z\oplus \Z$.
\item
  every Hodge class on each self-product of $P$ can be
presented as a linear combination of products of divisor classes. In
particular, the Hodge conjecture holds true for each self-product of
$P$.
\end{itemize}
\end{thm}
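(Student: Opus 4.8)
\emph{Reduction to a product of two hyperelliptic jacobians.} The plan is to identify $P$, up to isogeny, with a product of two hyperelliptic jacobians and then to read off $\End(P)$ and $\Hdg(P)$. The branch locus of $C_f$ is $B=\{a\}\sqcup\RR_h$, a set of $n=2g+2$ points, and $J(C_f)[2]$ is canonically the group of even-cardinality subsets of $B$ modulo complementation; an unramified double cover $\tilde C_f\to C_f$ corresponds to a nonzero such class, that is, to a partition $B=B_1\sqcup B_2$ into two subsets of even cardinality. By the classical theory of Prym varieties of hyperelliptic curves there is then an isogeny $P\sim J(C_1)\times J(C_2)$, where $C_i$ is the hyperelliptic curve whose branch locus is $B_i$; the dimension count $(|B_1|/2-1)+(|B_2|/2-1)=g-1$ is consistent. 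I may assume $a\in B_1$, so that $B_1=\{a\}\cup A_1$ and $B_2=A_2$ with $\RR_h=A_1\sqcup A_2$, $|A_1|$ odd and $|A_2|$ even; thus $C_1\colon y^2=(x-a)\prod_{r\in A_1}(x-r)$ and $C_2\colon y^2=\prod_{r\in A_2}(x-r)$. Since $\End(P)$ and $\Hdg(P)$ depend only on the complex abelian variety $P$, I am free to compute them after this isogeny and after replacing $K$ by any convenient finite extension.

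\emph{The Galois groups of the factors.} Let $L=K(\RR_h)$, so that $\Gal(L/K)=\ST_{n-1}$, and let $K'\subset L$ be the subfield fixed by the partition stabilizer $\ST_{A_1}\times\ST_{A_2}$. Then $C_1,C_2$ are defined over $K'\subset\C$, and $\Gal(f_2/K')\cong\ST_{|A_2|}$ acts as the full symmetric group on the roots of $f_2$, while $\Gal(f_1/K')\cong\ST_{|A_1|}$ acts as the full symmetric group on $A_1$ and fixes the rational root $a$; in other words $f_1$ is again of the shape $(x-a)h_1(x)$ with $\Gal(h_1)=\ST_{\deg h_1}$. Moreover $K'(A_1)$ and $K'(A_2)$ are linearly disjoint over $K'$, since $\Gal(L/K')$ is the direct product $\ST_{A_1}\times\ST_{A_2}$ and $K'(A_1),K'(A_2)$ are the fixed fields of its two factors.

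\emph{Endomorphisms and Hodge groups.} For a factor of genus $\ge 2$ (i.e.\ $|B_i|\ge 6$) I would invoke Theorem \ref{jacobian} for $C_2$, and for $C_1$ its extension to polynomials $(x-a)h_1$ with $\Gal(h_1)=\ST_{\deg h_1}$ — the case covered by the hypotheses of \cite[Th. 10.1]{ZarhinMMJ}, as recorded in the Remark following Theorem \ref{jacobian} — to conclude that $\End(J(C_i))=\Z$ and $\Hdg(J(C_i))=\Sp_i$, where $\Sp_i:=\Sp(\H_1(J(C_i),\Q))$; in particular each such $J(C_i)$ is simple. Writing $\H_1(P,\Q)=\H_1(J(C_1),\Q)\oplus\H_1(J(C_2),\Q)$, the group $\Hdg(P)$ is contained in $\Sp_1\times\Sp_2$ and surjects onto each factor. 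The equality $\Hom(J(C_1),J(C_2))=\{0\}$ — automatic by simplicity when $\dim J(C_1)\ne\dim J(C_2)$, and otherwise deduced from the linear disjointness above together with the homomorphism-vanishing results of Theorem \ref{homo} and their analogue for the reducible $f_1$ — says precisely that there is no nonzero $\Hdg(P)$-equivariant map between the two (irreducible) summands. By a Goursat argument for the almost simple groups $\Sp_1,\Sp_2$ this forces $\Hdg(P)=\Sp_1\times\Sp_2$, and then $\End(P)=\Z\oplus\Z$. If instead some $|B_i|=2$ the corresponding factor is trivial, $P$ is isogenous to the other (genus $\ge 3$, since $n\ge 10$) factor, and $\End(P)=\Z$, $\Hdg(P)=\Sp(\H_1(P,\Q))$.

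\emph{The Hodge conjecture, and the main obstacle.} Once $\Hdg(P)$ is a product of symplectic groups acting through the standard representations on the summands of $\H_1(P,\Q)$, the algebra of Hodge classes on every self-product $P^m$ is generated by the classes of the symplectic forms, i.e.\ by products of divisor classes; this is the invariant-theoretic mechanism already underlying the last assertion of Theorem \ref{jacobian}, and it yields the Hodge conjecture for all $P^m$. The step I expect to be hardest is the degenerate case in which one factor, say $C_2$ with $|B_2|=4$, is an elliptic curve $E$ (for $n\ge 10$ at most one factor can be elliptic, and the complementary factor then has genus $\ge 2$). Here $\Gal(f_2/K')=\ST_4$ controls only $E[2]$, whose Galois image $\GL_2(\F_2)$ is solvable and does not by itself exclude complex multiplication, so I must separately prove $\End(E)=\Z$; otherwise $E$ would contribute an order in an imaginary quadratic field and $\End(P)$ would have rank $3$. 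The plan here is to use the full symmetry $\Gal(h)=\ST_{n-1}$: by $4$-transitivity every four-element subset of $\RR_h$ is $\Gal(K)$-conjugate to $A_2$, so a complex multiplication on $E$ would force the same on the elliptic curve attached to every four-element subset of $\RR_h$, all with one and the same CM field; propagating this across overlapping subsets and combining it with the absence of homomorphisms to the genus $\ge 2$ factor should produce a contradiction, giving $\End(E)=\Z$ and $\Hdg(E)=\SL_2$. This CM-exclusion for a possible elliptic factor is the crux of the argument.
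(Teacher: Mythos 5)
Your overall architecture coincides with the paper's (Theorem \ref{help1}): the Mumford--Dalaljan decomposition $P=J(C_{f_1})\times J(C_{f_2})$ indexed by a partition of the branch locus $\{a\}\sqcup\RR_h$ into even parts, the intermediate field fixed by $\Perm(A_1)\times\Perm(A_2)$ with its linear-disjointness properties (the paper's Lemma \ref{key}), Theorem \ref{jacobian} and its $(x-a)h_1$-variant for the factors of genus $\ge 2$, vanishing of $\Hom$ between the factors, and the resulting product structure of $\Hdg(P)$. Your Goursat argument for $\Hdg(P)=\Sp_1\times\Sp_2$ is a legitimate substitute for the paper's appeal to Hazama's theorem on nonsimple abelian varieties (Theorem \ref{nonsimple}, via \cite{Hazama} and \cite{MZ2}); it requires the standard identification of $\Hom(X_1,X_2)\otimes\Q$ with the $\Hdg(X_1\times X_2)$-equivariant maps and the absence of outer automorphisms of $\Sp_{2m}$, both routine.

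The genuine gap is exactly where you place it, and your sketch does not close it, for two reasons. First, the contradiction you propose to extract from a CM elliptic factor is misdirected: the absence of homomorphisms from $E$ to the genus~$\ge 2$ factor is automatic once that factor has $\End=\Z$ (non-isogenous simple varieties of different dimensions) and yields nothing. The paper's contradiction (Theorem \ref{ellipticZ}) compares instead the elliptic curves attached to two \emph{disjoint} $4$-element subsets $T_1,T_2\subset\RR_h$ (which exist since $n-1\ge 9$): if one such curve has CM, then $j_T$ generates an abelian extension, the simplicity of $\A_{n-1}$ forces $j_T$ into the base field, so all these curves are isomorphic over $\K$; but the disjoint-subset homomorphism vanishing (Theorem \ref{homoG}, built on Lemma \ref{key} and Theorem \ref{homo}) forbids $\Hom(J(C_{f_{T_1}}),J(C_{f_{T_2}}))\ne\{0\}$ in characteristic zero. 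Second, you treat only the case where the elliptic factor is $C_2$ with all four branch points in $\RR_h$; the other case, $C_1:y^2=(x-a)f_{A_1}(x)$ with $\#(A_1)=3$, is genuinely different, because any two such competing elliptic curves \emph{share} the branch point $a$, so the disjoint-branch-locus vanishing does not apply directly. The paper handles it by the substitution $x_1=1/(x-a)$, which converts $(x-a)f_T(x)$ into an odd-degree model with full symmetric Galois group (Remark \ref{odd}), and then uses Theorem \ref{homoGab} together with $3$-transitivity of $\A_{n-1}$ (Theorem \ref{ellipticZ1}). Until both points are supplied, the step you yourself call the crux remains open.
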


Our proof is based on the explicit description of Prym varieties of
hyperelliptic curves \cite{MumfordP,Dal} and our results about Hodge
groups of hyperelliptic jacobians mentioned above.

\begin{rem}
If $n=2g+2\le 10$ then $\dim(P)=g-1\le 3$. Notice that  if $A$ is a
complex abelian varietiy of dimension $\le 3$ then it is well known
that every Hodge class on each self-product of $A$ can be presented
as a linear combination of products of divisor classes \cite[Th.
0.1(iv)]{MZ2}. In particular, the Hodge conjecture holds true for
each self-product of $A$.
\end{rem}

The paper is organized as follows. In Section \ref{Galois} we
discuss an elementary construction from Galois theory  and apply it
in Section \ref{homohyper} to homomorphisms of hyperelliptic
jacobians. Section \ref{hyperhodge} deals with Hodge groups of
hyperelliptic jacobians. In Section \ref{hyperprym} we discuss
hyperelliptic prymians and prove the main results.

\section{Galois theory}
\label{Galois}

Throughout this Section, $K$ is an arbitrary field  and $n\ge 3$ is
an integer, $f(x)\in K[x]$ is a degree $n$ irreducible polynomial,
whose Galois group
$$\Gal(f)=\Gal(K(\RR_f)/K)$$
is the full symmetric group $\Perm(\RR_f)=\ST_n$. If $T\subset
\RR_f$ is a non-empty subset then we put
$$f_T(x)=\prod_{\alpha\in T}(x-\alpha)\in K(\RR_f)[x].$$
By definition,
$$\deg(f_T)=\#(T), \ \RR_{f_T}=T.$$
 We view $\Perm(T)$ as a subgroup of
$\Perm(\RR_f)=\Gal(f)$ that consists of all permutations that leave
invariant every element outside $T$.

\begin{rem}
\label{oneT} Let us consider the subfield $E_0=K(\RR_f)^{\Perm(T)}$
of $\Perm(T)$-invariants. Since $\Perm(T)$ leaves invariant
$T=\RR_{f_T}$,
$$f_T(x)\in E_0[x].$$
Clearly,
$$\Gal(K(\RR_f)/E_0)=\Perm(T)\subset \Perm(\RR_f)=\Gal(\RR_f/K).$$
 Let us prove that the splitting field
$E_0(\RR_{f_T})=E_0(T)$ of $f_T(x)$ over $E_0$ coincides with
$K(\RR_f)$. Indeed, $\Gal(K(\RR_f))/E_0(T))$ consists of all
elements of $\Perm(T)=\Gal(K(\RR_f)/E_0)$ that leave invariant every
element of $T$. Since every element of $\Perm(T)$ leaves invariant
every element of $\RR_f\setminus T$, $\Gal(K(\RR_f))/E_0(T))=\{1\}$,
i.e., $K(\RR_f)=E_0(T)=E_0(\RR_{f_T})$. This implies that the Galois
group
$$\Gal(E_0(\RR_{f_T})/E_0)=\Gal(K(\RR_f)/E_0)=\Perm(T).$$
\end{rem}

\begin{lem}
\label{key} Let $T$ and $S$ be two nonempty disjoint subsets of
$\RR_f$. Then there exists a field subextension $E/K \subset
K(\RR_f)/K$ that enjoys the following properties.

\begin{itemize}
\item[(i)]
The Galois group $\Gal(K(\RR_f)/E)$ coincides with the subgroup
$$\Gal(T)\times \Gal(S)\subset \Perm(\RR_f)=\Gal(K(\RR_f)/K),$$
which consists of all permutations that leave invariant $T,S$ and
every element outside $T \sqcup S$.
\item[(ii)]
Both $f_T(x)$ and $f_S(x)$ lie in $E[x]$, i.e., all their
coefficients belong to $E$.
\item[(iii)]
Let $E(\RR_T)=E(T)$ and $E(\RR_S)=E(S)$ be the splitting fields over
$E$ of $f_T(x)$ and $f_S(x)$ respectively. Then the natural
injective homomorphisms
$$\Gal(E(T)/E)\hookrightarrow \Perm(T), \ \Gal(E(S)/E)\hookrightarrow
\Perm(S)$$ are group isomorphisms, i.e.,
$$\Gal(E(T)/E)=\Perm(T), \ \Gal(E(S)/E)=
\Perm(S).$$
\item[(iv)]
$E(T)$ and $E(S)$  are linearly disjoint over $E$.
\item[(v)]
The compositum $E(T) E(S)$ coincides with $K(\RR_f)$.
\end{itemize}
\end{lem}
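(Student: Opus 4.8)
The plan is to reduce every assertion to the Galois correspondence for the Galois extension $K(\RR_f)/K$, whose group is the full symmetric group $\Perm(\RR_f)=\ST_n$. Let $H\subset\Perm(\RR_f)$ be the subgroup of all permutations that leave both $T$ and $S$ invariant and fix every element outside $T\sqcup S$; since $T\cap S=\emptyset$, this subgroup is the internal direct product $H=\Perm(T)\times\Perm(S)$, where I regard $\Perm(T)$ and $\Perm(S)$ as the subgroups fixing everything outside $T$, respectively $S$. I set $E=K(\RR_f)^{H}$. By the fundamental theorem of Galois theory $\Gal(K(\RR_f)/E)=H$, which is exactly assertion (i). For (ii), every element of $H$ permutes the set $T=\RR_{f_T}$ among itself, hence fixes each coefficient of $f_T(x)=\prod_{\alpha\in T}(x-\alpha)$; therefore these coefficients lie in $K(\RR_f)^{H}=E$, so $f_T(x)\in E[x]$, and likewise $f_S(x)\in E[x]$.

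Next I would identify the two relevant subfields. Arguing exactly as in Remark \ref{oneT} (applied over the base field $E$ in place of $K$), the splitting field $E(T)$ of $f_T$ over $E$ satisfies $\Gal(K(\RR_f)/E(T))=\{\sigma\in H:\sigma|_T=\mathrm{id}\}$. Because every element of $H$ already fixes $\RR_f\setminus(T\sqcup S)$ pointwise, this subgroup is precisely $\{1\}\times\Perm(S)$; symmetrically $\Gal(K(\RR_f)/E(S))=\Perm(T)\times\{1\}$. Both of these are normal in $H$, being the two direct factors, so $E(T)/E$ and $E(S)/E$ are Galois, with
$$\Gal(E(T)/E)=H/(\{1\}\times\Perm(S))\cong\Perm(T),\quad \Gal(E(S)/E)=H/(\Perm(T)\times\{1\})\cong\Perm(S),$$
and under these isomorphisms the restriction maps to $\Perm(T)$ and $\Perm(S)$ are exactly the natural injections of (iii); this proves (iii).

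For (v) I would use that the compositum $E(T)E(S)$ corresponds to the intersection of the two subgroups, namely $(\{1\}\times\Perm(S))\cap(\Perm(T)\times\{1\})=\{1\}$, so $E(T)E(S)=K(\RR_f)^{\{1\}}=K(\RR_f)$. Finally, for (iv), the subfield $E(T)\cap E(S)$ corresponds to the subgroup generated by $\{1\}\times\Perm(S)$ and $\Perm(T)\times\{1\}$, which is all of $H$, whence $E(T)\cap E(S)=E$; since $E(T)/E$ and $E(S)/E$ are both Galois, the equality $E(T)\cap E(S)=E$ forces them to be linearly disjoint over $E$. I do not expect any genuine obstacle: the whole argument is the Galois dictionary together with the observation that $H=\Perm(T)\times\Perm(S)$ is internally a direct product of two normal subgroups. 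The only points demanding care are the bookkeeping that each named subgroup corresponds to the intended subfield, and the use of the standard criterion that two finite Galois extensions inside a common field are linearly disjoint over $E$ precisely when they meet in $E$.
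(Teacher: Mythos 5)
Your proposal is correct and follows essentially the same route as the paper: the same fixed field $E=K(\RR_f)^{\Perm(T)\times\Perm(S)}$, the same identification of $\Gal(K(\RR_f)/E(T))$ and $\Gal(K(\RR_f)/E(S))$ with the two direct factors, and the Galois correspondence for the compositum. The only cosmetic difference is in (iv), where you deduce linear disjointness from $E(T)\cap E(S)=E$ for Galois subextensions, while the paper gets it from the degree count $[E(T)E(S):E]=[E(T):E]\,[E(S):E]$; these are equivalent standard criteria.
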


\begin{proof}
Recall that
$$\RR_{f_T}=T, \ \RR_{f_S}=S.$$
 We define $E$ as the subfield $K(\RR_f)^{\Perm(T)\times
\Perm(S)}$ of $\Perm(T)\times \Perm(S)$-invariants. Now Galois
theory gives us (i). The subgroup $\Perm(T)\times \Perm(S)$ leaves
invariant both sets $T=\RR_T$ and $S=\RR_S$. This implies that all
the coefficients of $f_T(x)$ and $f_S(x)$ are $[\Perm(T)\times
\Perm(S)]$-invariant, i.e., lie in $E$. This proves (ii). Clearly,
$$[K(\RR_f):E]=\#(\Perm(T)\times \Perm(S)).$$

Clearly, the subgroup of $\Perm(T)\times \Perm(S)$ that consists of
all permutations that act identically on $S$ coincides with
$\Perm(T)$. Similarly, the subgroup of $\Perm(T)\times \Perm(S)$
that consists of all permutations that act identically on $T$
coincides with $\Perm(S)$. This implies that
$$\Gal(E(T)/E)=[\Perm(T)\times \Perm(S)]/\Perm(S)=\Perm(T),$$
$$\Gal(E(S)/E)=[\Perm(T)\times \Perm(S)]/\Perm(T)=\Perm(S),$$
which proves (iii). This implies that
$$[E(T):E]=\#(\Perm(T)), \ [E(S):E]=\#(\Perm(S)).$$
Let $L$ be the compositum $E(T) E(S)$. Clearly, $L$ contains $T$,
$S$ and $E$. Therefore $\Gal(K(\RR_f)/L)$ consists of  elements of
$\Perm(T)\times \Perm(S)=\Gal(K(\RR_f)/E)$ that act identically on
$T$ and $T$. Since all elements of $\Perm(T)\times \Perm(S)$ act
identically on the complement to $T \sqcup S$, we conclude that
$\Gal(K(\RR_f)/L)=\{1\}$, i.e.,
$$K(\RR_f)=L=E(T) E(S).$$
This proves (v). We also obtain that
$$[E(T) E(S):E]=[K(\RR_f):E]=\#(\Perm(T)\times \Perm(S))=[E(T):E]
[E(S):E],$$ i.e.
$$[E(T) E(S):E]=[E(T):E]
[E(S):E],$$ which means that $E(T)/E$ and $E(S)/E$ are linearly
disjoint. This proves (iv).
\end{proof}

\section{Homomorphisms of hyperelliptic jacobians}
\label{homohyper} We keep the notation and assumptions of Section
\ref{Galois}. Also we assume that  $\fchar(K)\ne 2$.

\begin{thm}
\label{homoG}
  Let $T$ and $S$ be
disjoint nonempty subsets of $\RR_f$ and consider the hyperelliptic
curves
$$C_{f_T}:y^2=f_T(x), \ C_{f_S}:y^2=f_S(x)$$ and their jacobians
$J(C_{f_T})$ and $J(C_{f_S})$. Then either
$$\Hom(J(C_{f_T}), J(C_{f_S}))=\{0\}, \ \Hom(J(C_{f_S}),
J(C_{f_T}))=\{0\}$$ or $\fchar(K)>0$ and both $J(C_{f_T})$ and
$J(C_{f_S})$ are supersingular abelian varieties.
\end{thm}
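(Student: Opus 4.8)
The plan is to reduce Theorem \ref{homoG} to Theorem \ref{homo} by exhibiting a single base field over which $f_T$ and $f_S$ become irreducible polynomials with full symmetric Galois groups and linearly disjoint splitting fields. The machinery for this is exactly Lemma \ref{key}: applied to the disjoint subsets $T,S\subset\RR_f$, it produces a subextension $E/K$ inside $K(\RR_f)/K$ with $\Gal(E(T)/E)=\Perm(T)\cong\ST_{\#(T)}$ and $\Gal(E(S)/E)=\Perm(S)\cong\ST_{\#(S)}$ by part (iii), with $f_T(x),f_S(x)\in E[x]$ by part (ii), and with $E(T)$ and $E(S)$ linearly disjoint over $E$ by part (iv).

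First I would set $m=\#(T)$ and $r=\#(S)$ and record that $\fchar(E)=\fchar(K)\ne 2$, so that the hyperelliptic curves $C_{f_T}$ and $C_{f_S}$ and their jacobians are defined over $E$. Since the Galois group of $f_T$ (resp.\ $f_S$) over $E$ is the full symmetric group on its roots, transitivity shows $f_T$ and $f_S$ are irreducible over $E$ of degrees $m$ and $r$. The one genuine mismatch with the hypotheses of Theorem \ref{homo} is the degree bound: that theorem requires both degrees to be $\ge 3$, whereas $T$ and $S$ are only assumed nonempty. I would handle the small cases $\#(T)\le 2$ (and symmetrically $\#(S)\le 2$) separately: if $\#(T)=1$ then $f_T$ is linear, $C_{f_T}$ has genus $0$, and $J(C_{f_T})=\{0\}$, so all Hom-groups to or from it vanish trivially; if $\#(T)=2$ then $f_T$ is quadratic, $C_{f_T}$ again has genus $0$ and $J(C_{f_T})=\{0\}$, and the conclusion is again immediate. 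Thus the assertion is vacuously the ``$\Hom=\{0\}$'' alternative whenever either subset has fewer than three elements.

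In the remaining case $m,r\ge 3$, the conditions of Theorem \ref{homo} are met over the field $E$: the polynomials $f_T,f_S\in E[x]$ are irreducible of degrees $m,r\ge 3$ with $\Gal(f_T/E)=\ST_m$ and $\Gal(f_S/E)=\ST_r$, and their splitting fields $E(T),E(S)$ are linearly disjoint over $E$. Applying Theorem \ref{homo} directly yields that either $\Hom(J(C_{f_T}),J(C_{f_S}))=\{0\}$ and $\Hom(J(C_{f_S}),J(C_{f_T}))=\{0\}$, or else $\fchar(E)=\fchar(K)>0$ and both jacobians are supersingular. Since the two curves and their jacobians are the same whether viewed over $E$ or over $\K$, and since Hom-groups of abelian varieties over $\K$ are computed over a common field of definition, this is exactly the dichotomy asserted in Theorem \ref{homoG}.

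I do not expect a serious obstacle here, as the result is essentially a formal consequence of the Galois-theoretic Lemma \ref{key} feeding into the already-established Theorem \ref{homo}. The only point demanding care is the bookkeeping of the degenerate low-degree cases and the observation that passing from the ground field $K$ to the auxiliary field $E$ changes neither the curves nor the relevant Hom-groups; the supersingularity alternative is characteristic-intrinsic and so is unaffected by the base change, since $\fchar(E)=\fchar(K)$.
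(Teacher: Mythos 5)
Your proposal is correct and follows essentially the same route as the paper's own proof: dispose of the cases $\#(T)\le 2$ or $\#(S)\le 2$ via vanishing of the genus-zero jacobian, then invoke Lemma \ref{key} to produce the field $E$ over which $f_T$ and $f_S$ are irreducible with full symmetric Galois groups and linearly disjoint splitting fields, and conclude by Theorem \ref{homo}. The extra remarks you include (transitivity giving irreducibility, invariance of the Hom-groups and of the supersingularity alternative under the base change from $K$ to $E$) are accurate and only make explicit what the paper leaves implicit.
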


\begin{proof}
If $\#(T)<3$ (resp.  $\#(S)<3$) then $C_{f_T}$ (resp. $C_{f_S}$) has
genus zero and $J(C_{f_T})=0$ (resp. $J(C_{f_S})=0$), which implies
that there are no nonzero homomorphisms between $J(C_{f_T})$ and
$J(C_{f_S})$. So, further we assume that
$$n_1:=\#(T)\ge 3, \ n_2:=\#(S)\ge 3.$$
By Lemma \ref{key}, there exists a field $E$ such that both $f_T(x)$
and $f_S(x)$ lie in $E[x]$, their Galois groups are
$\Perm(T)\cong\ST_{n_1}$ and $\Perm(S)\cong\ST_{n_2}$ respectively.
In addition, their splitting fields are linearly disjoint over $E$.
 Now the result follows from Theorem \ref{homo} applied to $E,
f_T(x), f_S(x)$ instead of $K, f(x), h(x)$.
\end{proof}

\begin{rem}
\label{odd} Suppose that  $m:=\#(S)=2r+1$ is odd and let $b$ be an
arbitrary element of $K$. Let us consider the hyperelliptic curve
$C^{b}_{f_S}:y^2=(x-b)f_S(x)$. By Remark \ref{oneT},  there exists a
field $E_0\subset K(\RR_f)$ such that  $f_S(x)$  lies in $E_0[x]$
and $\Gal(E_0(\RR_{f_S})/E_0)=\Perm(S)=\ST_m$. Then the standard
substitution \cite[p. 25]{ZarhinPLMS2}
$$x_1=\frac{1}{x-b}, \ y_1:=\frac{y}{(x-b)^{r+1}}$$
 gives us  a
degree $m$ irreducible polynomial $h(x_1)\in E[x_1]$ such that
$$E_0(\RR_h)=E_0(\RR_{f_S}), \
\Gal(E_0(\RR_h)/E)=\Gal(E_0(\RR_{f_S})/E)=\ST_m$$ and $C^{b}_{f_S}$
is $E_0$-birationally isomorphic to the hyperelliptic curve
$C_{h}:y_1^2=h(x_1)$. (It is assumed in \cite[p. 25]{ZarhinPLMS2}
that $m \ge 5$ but the substitution works for any positive odd $m$.)
\end{rem}

\begin{cor}
\label{homoGt}  Let $T$ and $S$ be disjoint nonempty subsets of
$\RR_f$ and assume that $\#(S)$ is odd. Let $b$ be an arbitrary
element of $K$. Let us consider the hyperelliptic curves
$$C_{f_T}:y^2=f_T(x), \ C^{b}_{f_S}:y^2=(x-b)f_S(x)$$ and their jacobians
$J(C_{f_T})$ and $J(C^{b}_{f_S})$. Then either
$$\Hom(J(C_{f_T}), J(C^{b}_{f_S}))=\{0\}, \ \Hom(J(C^{b}_{f_S}),
J(C_{f_T}))=\{0\}$$ or $\fchar(K)>0$ and both $J(C_{f_T})$ and
$J(C^{b}_{f_S})$ are supersingular abelian varieties.
\end{cor}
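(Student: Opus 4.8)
The plan is to reduce Corollary \ref{homoGt} to Theorem \ref{homo} by replacing the even-degree curve $C^{b}_{f_S}$ with a birationally isomorphic curve attached to an \emph{odd}-degree irreducible polynomial, so that the hypotheses on full symmetric Galois groups and linearly disjoint splitting fields become available over a single base field. First I would dispose of the degenerate cases: if $\#(T)<3$ then $C_{f_T}$ has genus zero and $J(C_{f_T})=0$, while if $\#(S)=1$ (the only odd value below $3$) then $(x-b)f_S(x)$ has degree $2$, so $C^{b}_{f_S}$ has genus zero and $J(C^{b}_{f_S})=0$; in either case both $\Hom$ groups vanish. Hence I may assume $n_1:=\#(T)\ge 3$ and $m:=\#(S)\ge 3$.

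Next I would invoke Lemma \ref{key} to produce the field $E=K(\RR_f)^{\Perm(T)\times\Perm(S)}$, over which both $f_T$ and $f_S$ are defined with $\Gal(E(T)/E)=\Perm(T)\cong\ST_{n_1}$ and $\Gal(E(S)/E)=\Perm(S)\cong\ST_m$, and over which the splitting fields $E(T)$ and $E(S)$ are linearly disjoint. Since $b\in K\subset E$, the substitution of Remark \ref{odd} is defined over $E$; applying it to $C^{b}_{f_S}$ yields a degree-$m$ polynomial, explicitly $h(x_1)=\prod_{\alpha\in S}(1-(\alpha-b)x_1)$, whose coefficients are symmetric functions of $S$ and therefore lie in $E$ (note that $b\notin\RR_f$ because $f$ is irreducible of degree $\ge 2$, so $\alpha\neq b$ for all $\alpha\in S$). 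The splitting field of $h$ over $E$ is exactly $E(S)$, whence $\Gal(h/E)=\Perm(S)=\ST_m$ by Lemma \ref{key}(iii); in particular $h$ is irreducible over $E$. The decisive gain is that the splitting field of $h$ is $E(S)$, which by Lemma \ref{key}(iv) is linearly disjoint over $E$ from the splitting field $E(T)$ of $f_T$.

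Now $C^{b}_{f_S}$ is $E$-birationally isomorphic to $C_{h}:y_1^2=h(x_1)$, and since birational equivalence of smooth projective curves is an isomorphism, $J(C^{b}_{f_S})\cong J(C_{h})$ as abelian varieties over $\K$. I would then apply Theorem \ref{homo} over $E$ to the pair $f_T,h$: both are irreducible of degree $\ge 3$ with full symmetric Galois groups and linearly disjoint splitting fields, so either both Hom groups between $J(C_{f_T})$ and $J(C_h)$ vanish, or $\fchar(K)>0$ and both jacobians are supersingular. Transporting the conclusion along the isomorphism $J(C_h)\cong J(C^{b}_{f_S})$ gives exactly the stated dichotomy for $J(C_{f_T})$ and $J(C^{b}_{f_S})$.

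I expect the main difficulty to be purely the bookkeeping of base fields. Remark \ref{odd} naturally produces $h$ over the larger field $E_0=K(\RR_f)^{\Perm(S)}$, whereas the linear disjointness I need is furnished by Lemma \ref{key} over the smaller field $E=K(\RR_f)^{\Perm(T)\times\Perm(S)}$. The point to check carefully is that the substitution, being defined over $K$, descends to $E$ and that over $E$ the transformed polynomial $h$ retains the full Galois group $\ST_m$ with splitting field precisely $E(S)$; once this compatibility is secured, Lemma \ref{key}(iv) supplies the linear disjointness that Theorem \ref{homo} requires, and no further computation is needed.
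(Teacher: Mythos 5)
Your proposal is correct and follows essentially the same route as the paper: reduce to the odd-degree case via the substitution of Remark \ref{odd} performed over the field $E$ of Lemma \ref{key}, then apply Theorem \ref{homo} to $f_T$ and the transformed polynomial $h$ and transport the conclusion along $J(C^{b}_{f_S})\cong J(C_h)$. The extra details you supply (the explicit formula for $h$, the observation that $b\notin\RR_f$ by irreducibility of $f$, and the genus-zero degenerate cases) are all sound and merely make explicit what the paper leaves implicit.
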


\begin{proof}
We may assume that both $m_1=\#(T)$ and $m_2=\#(S)$ are, at least,
$3$.
 Let $E$ be as in Lemma \ref{key}. In particular, the splitting fields
$E(T)$ and $E(S)$ are linearly disjoint over $E$ and
$$\Gal(E(T)/E)\cong \ST_{m_1}, \ \Gal(E(S)/E) \cong \ST_{m_2}.$$
 Using Remark \ref{homoGt} over
$E$ (instead of $E_0$), we obtain that there is a degree $m$
irreducible polynomial $h(x)\in E[x]$ such that
$$E(\RR_h)=E(\RR_{f_S}), \
\Gal(E(\RR_h)/E)=\Gal(E(\RR_{f_S})/E)=\ST_{m_2}$$ and $C^{b}_{f_S}$
is birationally $E$-isomorphic to the hyperelliptic curve
$C_{h}:y^2=h(x)$. Clearly, the jacobians $J(C^{b}_{f_S})$ and
$J(C_h)$ are isomorphic. Applying Theorem \ref{homo} to $E,
f_T(x),h(x)$, we conclude that either
$$\Hom(J(C_{f_T}), J(C_h))=\{0\}, \ \Hom(J(C_h)),
J(C_{f_T}))=\{0\}$$ or $\fchar(K)>0$ and both $J(C_{f_T})$ and
$J(C_{f_S})$ are supersingular abelian varieties. Since
$J(C^{b}_{f_S})$ and $J(C_h)$ are isomorphic, we are done.
\end{proof}

\begin{thm}
\label{homoGab} Let $T$ and $S$ be disjoint nonempty subsets of
$\RR_f$ and assume that both $\#(T)$ and $\#(S)$ are odd. Let $a$
and $b$ be arbitrary (not necessarily distinct) elements of $K$. Let
us consider the hyperelliptic curves
$$C^{a}_{f_T}:y^2=(x-a)f_T(x), \ C^{b}_{f_S}:y^2=(x-b)f_S(x)$$ and their jacobians
$J(C^{a}_{f_T})$ and $J(C^{b}_{f_S})$. Then either
$$\Hom(J(C^{a}_{f_T}), J(C^{b}_{f_S}))=\{0\}, \ \Hom(J(C^{b}_{f_S}),
J(C^{a}_{f_T}))=\{0\}$$ or $\fchar(K)>0$ and both $J(C^{b}_{f_T})$
and $J(C^{a}_{f_S})$ are supersingular abelian varieties.
\end{thm}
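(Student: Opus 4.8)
The plan is to follow the strategy of Corollary \ref{homoGt}, but now to apply the odd-degree substitution of Remark \ref{odd} symmetrically to \emph{both} curves at once. First I would dispose of the degenerate cases. Since $\#(T)$ and $\#(S)$ are odd and positive, each of them is either $1$ or at least $3$. Note that for $\alpha\in T\subset\RR_f$ we have $\alpha\notin K$ (as $f$ is irreducible of degree $n\ge 3$), so $a\ne\alpha$ and $(x-a)f_T(x)$ is squarefree. If $\#(T)=1$ then $(x-a)f_T(x)$ has degree $2$, so $C^{a}_{f_T}$ has genus $0$ and $J(C^{a}_{f_T})=\{0\}$; the same applies to $S$. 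In those cases there are trivially no nonzero homomorphisms, so I may assume
$$m_1:=\#(T)\ge 3, \quad m_2:=\#(S)\ge 3.$$

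Next I would invoke Lemma \ref{key} for the disjoint sets $T$ and $S$ to produce the intermediate field $E/K\subset K(\RR_f)/K$ with $\Gal(K(\RR_f)/E)=\Perm(T)\times\Perm(S)$. The outputs I need are that $f_T(x),f_S(x)\in E[x]$, that the splitting fields satisfy
$$\Gal(E(T)/E)=\Perm(T)\cong\ST_{m_1}, \quad \Gal(E(S)/E)=\Perm(S)\cong\ST_{m_2},$$
and, crucially, that $E(T)$ and $E(S)$ are linearly disjoint over $E$ by Lemma \ref{key}(iv).

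The heart of the argument is then to apply the substitution of Remark \ref{odd} over $E$ (in place of $E_0$) to each curve \emph{separately}. Since $m_1=\#(T)$ is odd, the change of variables $x_1=1/(x-a)$, $y_1=y/(x-a)^{(m_1+1)/2}$ turns $C^{a}_{f_T}$ into $C_g\colon y_1^2=g(x_1)$ for an irreducible $g(x)\in E[x]$ of degree $m_1$ with $E(\RR_g)=E(\RR_{f_T})=E(T)$ and $\Gal(E(\RR_g)/E)=\ST_{m_1}$; the birational $E$-isomorphism of curves yields an isomorphism of jacobians $J(C^{a}_{f_T})\cong J(C_g)$. Applying the same substitution with $b$ in place of $a$ to $C^{b}_{f_S}$ gives an irreducible $h(x)\in E[x]$ of degree $m_2$ with $E(\RR_h)=E(S)$, $\Gal(E(\RR_h)/E)=\ST_{m_2}$, and $J(C^{b}_{f_S})\cong J(C_h)$.

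Finally, because each substitution preserves the relevant splitting field ($E(\RR_g)=E(T)$ and $E(\RR_h)=E(S)$), the linear disjointness of $E(T)$ and $E(S)$ over $E$ from Lemma \ref{key}(iv) is exactly the linear disjointness of the splitting fields of $g$ and $h$. Thus $E,g(x),h(x)$ satisfy all hypotheses of Theorem \ref{homo}, and I conclude that either $\Hom(J(C_g),J(C_h))=\Hom(J(C_h),J(C_g))=\{0\}$, or $\fchar(K)>0$ and both $J(C_g)$ and $J(C_h)$ are supersingular; transporting this through $J(C^{a}_{f_T})\cong J(C_g)$ and $J(C^{b}_{f_S})\cong J(C_h)$ gives the claim. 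The point to watch—the main obstacle—is precisely this preservation of linear disjointness: one must be sure that performing the two \emph{different} odd-degree substitutions does not enlarge or otherwise perturb the two splitting fields. This is guaranteed because Remark \ref{odd} produces a polynomial whose splitting field over $E$ coincides with that of $f_T$ (resp. $f_S$), so the substitutions operate ``within'' $E(T)$ and $E(S)$ respectively and leave the disjointness supplied by Lemma \ref{key} untouched.
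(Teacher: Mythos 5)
Your proposal is correct and follows essentially the same route as the paper's own proof: reduce to $\#(T),\#(S)\ge 3$, take $E$ from Lemma \ref{key}, apply the odd-degree substitution of Remark \ref{odd} over $E$ to each of $(x-a)f_T(x)$ and $(x-b)f_S(x)$ separately, and feed the resulting pair of irreducible polynomials (whose splitting fields are still the linearly disjoint $E(T)$ and $E(S)$) into Theorem \ref{homo}. Your explicit remark that the substitution does not perturb the splitting fields, and hence preserves the linear disjointness, is exactly the point the paper uses implicitly.
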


\begin{proof}
We may assume that both $m_1:=\#(T)$ and $m_2:=\#(S)$ are, at least,
$3$. Again,  let $E$ be as in Lemma \ref{key}. Applying Remark
\ref{homoGt} two times over $E$ (instead of $E_0$) to the
polynomials $(x-a)f_T(x)$ and $(x-b)f_S(x)$, we conclude that
 there are degree $m$ irreducible polynomials $h_1(x)\in E[x]$ and
$h_2 (x)\in E[x]$ such that
$$E(\RR_{h_1})=E(T), \ E(\RR_{h_2})=E(S),$$
$$\Gal(E(\RR_{h_1})/E)=\ST_{m_1}, \ \Gal(E(\RR_{h_2})/E)=\ST_{m_2},$$
$C^{a}_{f_T}$ is $E$-birationally isomorphic to $C_{h_1}$ and
$C^{b}_{f_T}$ is $E$-birationally isomorphic to $C_{h_2}$. Clearly,
$J(C^{a}_{f_T})\cong J(C_{h_1})$ and $J(C^{b}_{f_S})\cong
J(C_{h_2})$. Applying Theorem \ref{homo} to $E, h_1(x),h_2(x)$, we
conclude that either
$$\Hom(J(C_{h_1}), J(C_{h_2}))=\{0\}, \ \Hom(J(C_{h_2})),
J(C_{h_1}))=\{0\}$$ or $\fchar(K)>0$ and both $J(C_{h_1})$ and
$J(C_{h_2})$ are supersingular abelian varieties. The rest is clear.

\end{proof}

\begin{rem}
Let $K_2/K$ be the only quadratic subextension of $K(\RR_f)/K$.
Clearly, $K_2(\RR_f)=K(\RR_f)$ and the Galois group
$\Gal(K_2(\RR_f)/K_2)$ coincides with the alternating group $\A_n$.
\end{rem}

\begin{thm}
\label{ellipticZ} Suppose that $\fchar(K)\ne 2$. Let $T \subset
\RR_f$ be a $4$-element subset. Let us consider
 the corresponding elliptic curve
$$C_{f_T}:y^2=f_T(x)$$
and its jacobian $J(C_{f_T})$. If $n \ge 8$ then one of the
following conditions holds:

\begin{itemize}
\item
 $\End(J(C_{f_T}))=\Z$ for all $T$.
\item
$\fchar(K)>0$ and all $J(C_{f_T})$'s are supersingular elliptic
curves mutually isomorphic over $\K$.
\end{itemize}
\end{thm}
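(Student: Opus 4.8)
The plan is to set up a pairwise comparison between the elliptic curves $J(C_{f_T})$ as $T$ ranges over all $4$-element subsets of $\RR_f$, and to play two competing facts against each other: on the one hand, any two such curves sharing three of their four branch points are forced to be related; on the other hand, disjoint branch loci force independence by the homomorphism results already established. First I would observe that each $J(C_{f_T})$ is indeed an elliptic curve (genus one, since $\#(T)=4$), so $\End(J(C_{f_T}))$ is either $\Z$, an order in an imaginary quadratic field, or an order in a quaternion algebra; the last case occurs only in positive characteristic for supersingular curves. The dichotomy to prove is thus: either every $J(C_{f_T})$ has $\End=\Z$, or we are in the supersingular positive-characteristic world.

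The key engine is Corollary \ref{homoGt} together with Theorem \ref{homoGab}. Since $n\ge 8$, given any $4$-element subset $T$ I can choose a disjoint subset $S\subset\RR_f\setminus T$ with $\#(S)\ge 4$, and in particular I can select within $\RR_f\setminus T$ subsets of odd cardinality to invoke the odd-degree versions. The strategy is to show that if some $J(C_{f_T})$ has complex multiplication (i.e.\ $\End\ne\Z$ but is not the supersingular case), then I can transport this CM to a curve $C_{f_{T'}}$ built from a \emph{disjoint} set of roots $T'$, via a chain of intermediate $4$-element subsets that overlap in three roots at each step. Two elliptic curves whose defining quartics $f_T$ and $f_{T'}$ share three roots differ by moving a single branch point, and one shows they cannot both avoid CM while being isogenous unless forced into the supersingular case; conversely, once the CM has been transported to a $T'$ disjoint from the original $T$, the pair $(J(C_{f_T}),J(C_{f_{T'}}))$ has disjoint branch loci, so Theorem \ref{homoGab} (or Corollary \ref{homoGt}, choosing parities appropriately) applies and forces $\Hom=0$ unless both are supersingular. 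An isogeny between two CM elliptic curves of the same CM type is incompatible with $\Hom=0$, yielding a contradiction outside the supersingular regime.

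I expect the main obstacle to be the \textbf{transport-of-CM step}: making precise the claim that complex multiplication (or more generally a nontrivial endomorphism) on one $J(C_{f_T})$ propagates to neighboring subsets $T'$ so as to eventually reach a subset disjoint from $T$. The clean way to handle this is to exploit the transitivity of $\Gal(f)=\ST_n$ on $4$-element subsets: any two $4$-element subsets $T,T'$ are conjugate under the Galois action, so the curves $C_{f_T}$ and $C_{f_{T'}}$ are Galois-conjugate, hence $\End(J(C_{f_T}))$ and $\End(J(C_{f_{T'}}))$ are abstractly isomorphic rings, and in particular one has CM if and only if the other does. This immediately gives uniformity of the $\End=\Z$ versus CM alternative across all $T$. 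It then suffices to rule out the uniform-CM case in characteristic zero (and in the non-supersingular positive-characteristic case) by producing two disjoint $4$-element subsets $T,S$ — possible since $n\ge 8$ — whose CM curves would have to be non-isogenous by Theorem \ref{homoGab}, yet must be isogenous as Galois conjugates with the same CM, forcing the supersingular conclusion. The final bookkeeping is to confirm that in the supersingular case the curves are all isomorphic over $\K$, which follows from the fact that supersingular elliptic curves over $\K$ in fixed characteristic form a single isogeny class and that Galois conjugacy together with the absence of nontrivial moduli in the relevant sense pins them down up to $\K$-isomorphism.
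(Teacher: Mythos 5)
Your ``clean way'' via Galois conjugacy does give the main dichotomy, by a route genuinely different from the paper's. You observe that any two $4$-element subsets are conjugate under $\Gal(f)=\ST_n$, so the curves $C_{f_T}$ are Galois conjugates and their endomorphism rings are abstractly isomorphic; hence either all have $\End=\Z$ or all have CM by (orders in) the same imaginary quadratic field $k$, in which case two of them built from \emph{disjoint} $T_1,T_2$ (available since $n\ge 8$) are isogenous over $\K$, contradicting the vanishing of $\Hom$ unless one is in the supersingular alternative. The paper instead works with the $j$-invariants: it invokes Deuring (in positive characteristic) and classical CM theory (in characteristic zero) to show that $K'(j_T)/K'$ is an \emph{abelian} subextension of $K'(\RR_f)/K'$, whose Galois group $\A_n$ is simple nonabelian, forcing $j_T\in K'$ and hence $j_{\sigma T}=j_T$ for all $\sigma\in\A_n$; $4$-transitivity of $\A_n$ then makes all the curves literally isomorphic over $\K$, and the same disjoint-subsets endgame applies. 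Your first idea (a chain of $4$-sets overlapping in three roots) is never made precise, but it is superseded by the conjugacy argument, so that is not an issue. A small misattribution: for two $4$-element (even-cardinality) subsets the relevant result is Theorem \ref{homoG}, not Theorem \ref{homoGab} or Corollary \ref{homoGt}, which require odd cardinalities; the correct tool is available, so this is harmless.

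The genuine gap is in the final clause of the supersingular alternative: you must prove the curves are \emph{mutually isomorphic over $\K$}, and your justification --- that supersingular curves form a single isogeny class and have ``no nontrivial moduli'' --- is false. In characteristic $p$ there are roughly $(p-1)/12$ distinct supersingular $j$-invariants, so being supersingular (or isogenous) does not pin down the $\K$-isomorphism class. Galois conjugacy alone only yields $j_{\sigma T}=\sigma j_T$, i.e.\ that the $j$-invariants form a single Galois orbit inside $\F_{p^2}$, not that they coincide. To close this you need precisely the paper's argument: since $j_T$ is algebraic over $\F_p$ (Deuring), the subextension $K_2(j_T)/K_2$ is abelian, hence trivial because $\A_n=\Gal(K_2(\RR_f)/K_2)$ is simple nonabelian; thus $j_T$ is $\A_n$-invariant, and $4$-transitivity of $\A_n$ ($n\ge 8$) forces all the $j_T$ to be equal. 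Without some version of this step your proof establishes a weaker statement than the theorem asserts.
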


\begin{proof}
Let $j_T$ be the $j$-invariant of the elliptic curve $J(C_{f_T})$
(\cite{Tate}, \cite[Ch. III, Sect. 2]{Knapp}). Clearly,
$$j_T \in K(T)\subset K(\RR_f)$$ and
$$j_{\sigma T}=\sigma j_T \ \forall \sigma\in
\Gal(K(\RR_f)/K)=\Gal(f).$$ Suppose that $J(C_{f_T})$ admits complex
multiplication. Then one of the following two conditions holds.

\begin{itemize}
\item[(i)]
$p=\fchar(K)>0$. Then a classical result of M. Deuring asserts that
$j_T$ is {\sl algebraic}, i.e., lies in a finite field $\F_q$ where
$q$ is a power of the prime $p$. (See \cite{Deuring}, \cite[Sect.
3.2]{Oort}, \cite[Ch. 13, Sect. 5]{Lang}.) In particular, $K(j_T)/K$
is an abelian field extension.

\item[(ii)]
$\fchar(K)=0$. Then there exists an imaginary quadratic field $k$
such that $\End^0(J(C_{f_T}))=k$. In addition, a classical result of
the theory of complex multiplication asserts that $j_T$ is an
algebraic number such that the field extension $k(j_T)/k$ is
abelian. (See \cite[Sect. 5.4]{Shimura}, \cite[Ch. 10, Sect.
3]{Lang}.)
\end{itemize}

Let us consider the overfield $K^{\prime}$ of $K$ that is defined as
follows. If $\fchar(K)>0$ then $K^{\prime}=K_2$. If $\fchar(K)=0$
then $K^{\prime}$ is the compositum $K_2 k$ of $K_2$ and the
imaginary quadratic field $k$; in particular, $K^{\prime}$ contains
$k$.

Since $\A_n=\Gal(K^{\prime}(\RR_f)/K^{\prime})$ is simple
nonabelian, the field extension $K^{\prime}(\RR_f)/K^{\prime}$ does
not contain nontrivial abelian subextensions.
However, $j_T \in K^{\prime}(\RR_f)$ and the field (sub)extension
$K^{\prime}(j_T)/K^{\prime}$ is abelian. This implies that
 this
subextension is trivial, i.e., $j_T\in K^{\prime}$. This means that
for all $\sigma \in \Gal(K^{\prime}(\RR_f)/K^{\prime})=\A_n$
$$j_T=\sigma j_T=j_{\sigma T}.$$
Since $n \ge 8$, the permutatation group $\A_n$ is $4$-transitive
and  therefore the jacobians $J(C_{f_T})$'s are mutually isomorphic
over $\K$ for all $4$-element subsets $T\subset \RR_f$.

Let $T_1$ and $T_2$ be two {\sl disjoint} $4$-element subsets of
$\RR_f$. (Since $n\ge 8$, such $T_1$ and $T_2$ do exist.) Applying
Theorem \ref{homoG} to $T_1$ and $T_2$ (instead of $T$ and $S$) and
taking into account that $J(C_{f_{T_1}})$ and $J(C_{f_{T_2}})$ are
isomorphic over $\K$ (i.e.,
$\Hom(J(C_{f_{T_1}}),J(C_{f_{T_2}}))\ne\{0\}$), we conclude that
$\fchar(K)>0$ and both $J(C_{f_{T_1}})$ and $J(C_{f_{T_2}}))$ are
supersingular elliptic curves.
\end{proof}

\begin{thm}
\label{ellipticZ1} Suppose that $\fchar(K)\ne 2$. Let $a$ be an
arbitrary element of $K$. Let $T \subset \RR_f$ be a $3$-element
subset. Let us consider
 the corresponding elliptic curve
$$C^{a}_{f_T}:y^2=(x-a)f_T(x)$$
and its jacobian $J(C^{a}_{f_T})$. If $n \ge 6$ then one of the
following conditions holds:

\begin{itemize}
\item
 $\End(J(C^{a}_{f_T}))=\Z$ for all $T$.
\item
$\fchar(K)>0$ and all $J(C^{a}_{f_T})$'s are supersingular elliptic
curves mutually isomorphic over $\K$.
\end{itemize}
\end{thm}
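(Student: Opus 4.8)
The plan is to mimic, essentially verbatim, the argument of Theorem \ref{ellipticZ}, adjusting the numerical bound to reflect that we now work with $3$-element subsets $T$ and the ``shifted'' curves $C^{a}_{f_T}:y^2=(x-a)f_T(x)$. The governing principle is the same: an elliptic curve with complex multiplication has a $j$-invariant lying in an abelian extension of a suitable ground field, while a simple nonabelian Galois group forbids nontrivial abelian subextensions, forcing all the $j_T$ to be $\Gal$-invariant and hence the curves to be mutually isomorphic; and then disjointness of two subsets combined with Corollary \ref{homoGt} (or Theorem \ref{homoGab}) pins us down to the supersingular case.

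First I would attach to each $3$-element subset $T$ the $j$-invariant $j_T$ of the elliptic curve $J(C^{a}_{f_T})$. Since the coefficients of $(x-a)f_T(x)$ lie in $K(T)\subset K(\RR_f)$, we have $j_T\in K(\RR_f)$ and the Galois-equivariance $j_{\sigma T}=\sigma j_T$ for all $\sigma\in\Gal(f)$. Next, assuming $J(C^{a}_{f_T})$ has complex multiplication, I would invoke the same two classical inputs as before: in positive characteristic Deuring's theorem gives $j_T$ algebraic over the prime field (so $K(j_T)/K$ abelian), and in characteristic zero the theory of complex multiplication gives an imaginary quadratic field $k$ with $k(j_T)/k$ abelian. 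Passing to the overfield $K'$ (equal to $K_2$ in positive characteristic, and to $K_2 k$ in characteristic zero), the extension $K'(\RR_f)/K'$ has Galois group $\An$, which is simple nonabelian precisely when $n\ge 5$; since here $n\ge 6$ this holds, so the abelian subextension $K'(j_T)/K'$ must be trivial, i.e.\ $j_T\in K'$. Hence $j_T$ is fixed by all of $\An$, so $j_{\sigma T}=j_T$ for every $\sigma\in\An$.

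The crucial numerical point is $4$-transitivity. For the previous theorem the relevant object was a $4$-element subset and $\An$ is $4$-transitive for $n\ge 6$; here the subsets are $3$-element, so I need only that $\An$ acts transitively on $3$-element subsets, which holds (via $3$-transitivity of $\An$) as soon as $n\ge 5$. Thus for $n\ge 6$ all the $J(C^{a}_{f_T})$ with $T$ a $3$-element subset are mutually isomorphic over $\K$. To finish, I would pick two \emph{disjoint} $3$-element subsets $T_1,T_2\subset\RR_f$; such a pair exists because $n\ge 6$. Since $\#(T_2)=3$ is odd, Corollary \ref{homoGt} applies to $C_{f_{T_1}}$ and $C^{a}_{f_{T_2}}$ (or, more symmetrically, Theorem \ref{homoGab} applies to the two shifted curves), and the nonvanishing $\Hom(J(C^{a}_{f_{T_1}}),J(C^{a}_{f_{T_2}}))\ne\{0\}$ coming from the isomorphism rules out the first alternative of that statement. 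We are therefore forced into $\fchar(K)>0$ with both jacobians supersingular, which is exactly the second conclusion.

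The only genuine obstacle is bookkeeping the transitivity and simplicity thresholds so that the bound $n\ge 6$ is exactly what is needed and sufficient: one must check that $3$-transitivity of $\An$ (hence transitivity on unordered triples), simplicity of $\An$, and the existence of two disjoint triples all hold under $n\ge 6$, and that the shift by $a$ does not disturb the Galois-theoretic setup—indeed the splitting field of $(x-a)f_T(x)$ over the relevant subfield is still governed by $\Perm(T)$, as in Remark \ref{odd}. Everything else is a direct transcription of the proof of Theorem \ref{ellipticZ}, with Theorem \ref{homoG} replaced by its shifted analogues.
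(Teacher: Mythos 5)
Your proposal is correct and follows essentially the same route as the paper: the $j$-invariant $j_{T,a}$, the overfield $K'$, simplicity of $\A_n$ forcing $j_{T,a}\in K'$, $3$-transitivity giving mutual isomorphism, and two disjoint triples to reach the supersingular alternative. One small point: since the established nonzero homomorphism is between the two \emph{shifted} jacobians $J(C^{a}_{f_{T_1}})$ and $J(C^{a}_{f_{T_2}})$, the correct final step is Theorem \ref{homoGab} (your parenthetical alternative, and the paper's choice) rather than Corollary \ref{homoGt} applied to the unshifted $C_{f_{T_1}}$.
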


\begin{proof}
 Let $j_{T,a}$ be the $j$-invariant of the elliptic curve
$J(C^{a}_{f_T})$.
Clearly,
$$j_{T,a} \in K(T)\subset K(\RR_f)$$ and
$$j_{\sigma T,a}=\sigma j_{T,a} \ \forall \sigma\in
\Gal(K(\RR_f)/K)=\Gal(f).$$ Suppose that $J(C^{a}_{f_T})$ admits
complex multiplication. Then, as in the proof of Theorem
\ref{ellipticZ}, there exists an overfield $K^{\prime}\supset K_2$
such that either $K^{\prime}=K_2$ or $K^{\prime}$ is a quadratic
extension of $K_2$ and in both cases $K^{\prime}(j_{T,a})\subset
K^{\prime}(\RR_f)$ and the field (sub)extension
$K^{\prime}(j_{T,a})/K^{\prime}$ is abelian. Again,
$\A_n=\Gal(K^{\prime}(\RR_f)/K^{\prime})$ is simple nonabelian and
therefore there are no nontrivial abelian subextensions of
$K^{\prime}(\RR_f)/K^{\prime}$. This implies
  that $j_{T,a}\in K^{\prime}$, i.e.,
 for all $\sigma \in
\Gal(K^{\prime}(\RR_f)/K^{\prime})=\A_n$
$$j_{T,a}=\sigma j_{T,a}=j_{\sigma T, a}.$$

Since $n \ge 6$, the permutatation group $\A_n$ is $3$-transitive
and therefore the jacobians $J(C^{a}_{f_T})$'s are mutually
isomorphic over $\K$ for all $3$-element subsets $T\subset \RR_f$.
Let $T_1$ and $T_2$ be two {\sl disjoint} $3$-element subsets of
$\RR_f$. (Since $n\ge 6$, such $T_1$ and $T_2$ do exist.) Applying
Theorem \ref{homoGab} to $T_1,a$ and $T_2,a$ (instead of $T,a$ and
$S,b$) and taking into account that $J(C^{a}_{f_{T_1}})$ and
$J(C^{a}_{f_{T_2}})$ are isomorphic over $\K$ (i.e.,
$\Hom(J(C_{f_{T_1}}),J(C_{f_{T_2}}))\ne\{0\}$), we conclude that
$\fchar(K)>0$ and both $J(C^{a}_{f_{T_1}})$ and
$J(C^{a}_{f_{T_2}}))$ are supersingular elliptic curves. The rest is
clear.
\end{proof}

\section{Hodge groups of hyperelliptic jacobians}
\label{hyperhodge} We keep the notation and assumptions of Sections
\ref{Galois} and \ref{homohyper}. Also we assume that $K\subset \C$.

\begin{defn}
We say (as in \cite[Sect. 1.8]{MZ2}) that a complex abelian variety
$X$ satisfies property (D) if every Hodge class on each self-product
$X^r$ of $X$ can be presented as a linear combination of products of
divisor classes. If this condition  is satisfied then the Hodge
conjecture is true for all  $X^r$.
\end{defn}

\begin{rem}
Abelian varieties that satisfy (D) are also called {\sl stably
nondegenerate} \cite{hazamaT}; see also \cite{murty}.
\end{rem}

\begin{ex}
\label{DE} If $Y$ is an elliptic curve over $\C$ with $\End(Y)=\Z$
then it is well  known \cite[Th. 0.1(iv)]{MZ2} that $Y$ satisfies
(D) and $\Hdg(Y)=\Sp(\H_1(Y,\Q))$.
\end{ex}

\begin{thm}
\label{nonsimple} Let $X_1$ and $X_2$ be complex abelian varieties
of positive dimension and $X=X_1\times X_2$. Suppose that
$$\End(X_1)=\Z, \ \End(X_2)=\Z, \ \Hom(X_1,X_2)=\{0\}.$$

Then:

\begin{enumerate}
\item
$\End(X_1\times X_2)=\Z\oplus \Z$.
\item
If both $X_1$ and $X_2$ satisfy (D) then $\Hdg(X)= \ \Hdg(X_1)\times
\Hdg(X_2)$ and $X$ satisfies (D).
\end{enumerate}
\end{thm}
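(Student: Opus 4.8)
The plan is to prove (1) by first upgrading the hypothesis $\End(X_i)=\Z$ to simplicity of each factor. Indeed $\End^0(X_i):=\End(X_i)\otimes\Q=\Q$ is a field, hence contains no nontrivial idempotents, so each $X_i$ is a simple abelian variety. Two simple abelian varieties admit a nonzero homomorphism exactly when they are isogenous, and isogeny is a symmetric relation; thus the single hypothesis $\Hom(X_1,X_2)=\{0\}$ already forces $\Hom(X_2,X_1)=\{0\}$. Writing an endomorphism of $X_1\times X_2$ as a $2\times 2$ matrix whose $(i,j)$-entry lies in $\Hom(X_j,X_i)$, the two off-diagonal entries vanish and the diagonal entries lie in $\End(X_i)=\Z$, so $\End(X_1\times X_2)=\Z\oplus\Z$. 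This settles (1).

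For (2) the first step is to identify the individual Hodge groups. Write $V_i=\H_1(X_i,\Q)$, $V=\H_1(X,\Q)=V_1\oplus V_2$, $G_i=\Hdg(X_i)$, $H=\Hdg(X)$, and let $\psi=\psi_1\perp\psi_2$ be the polarization form. Since $\End^0(X_i)=\Q$ is the commutant of $G_i$ in $\End_\Q(V_i)$, the Lefschetz group of $X_i$ — the centralizer of $\End^0(X_i)$ inside $\Sp(V_i,\psi_i)$ — is all of $\Sp(V_i,\psi_i)$. Property (D) is precisely the assertion that the Hodge group coincides with the Lefschetz group (divisor classes generate the tensor invariants of the Lefschetz group, and a reductive subgroup with the same invariants on all tensor powers is equal). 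Hence (D) gives $G_i=\Sp(V_i,\psi_i)$, a connected almost-simple group whose Lie algebra $\sp(V_i)$ is simple.

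The core of (2) is the equality $H=G_1\times G_2$. One has the two standard facts that $H\subseteq G_1\times G_2$ and that $H$ projects surjectively onto each factor. Passing to Lie algebras, $\mathfrak{h}:=\Lie(H)$ is a subalgebra of $\sp(V_1)\oplus\sp(V_2)$ surjecting onto each simple summand, so by Goursat's lemma for Lie algebras either $\mathfrak{h}=\sp(V_1)\oplus\sp(V_2)$ or $\mathfrak{h}$ is the graph of an isomorphism $\phi:\sp(V_1)\xrightarrow{\sim}\sp(V_2)$. In the graph case $\dim V_1=\dim V_2$, and because $\sp_{2d}$ has no outer automorphisms, $\phi$ is inner; therefore $V_2$, viewed as an $\mathfrak{h}$-module through $\phi$, is isomorphic to $V_1$. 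This yields a nonzero $\mathfrak{h}$-equivariant — hence, as $H$ is connected, $H$-equivariant — map $V_1\to V_2$, i.e. a nonzero element of $\Hom^0(X_1,X_2)=\Hom(X_1,X_2)\otimes\Q$, contradicting $\Hom(X_1,X_2)=\{0\}$. Thus $\mathfrak{h}=\sp(V_1)\oplus\sp(V_2)$, and by connectedness $H=G_1\times G_2=\Hdg(X_1)\times\Hdg(X_2)$.

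Finally (D) for $X$ follows from the same Lefschetz-group criterion: $L(X)$ is the centralizer of $\End^0(X)=\Q\oplus\Q$ inside $\Sp(V,\psi)$, and the two idempotents of $\Q\oplus\Q$ are the projections onto $V_1$ and $V_2$, so any element of $L(X)$ preserves each $V_i$ and the form $\psi=\psi_1\perp\psi_2$; hence $L(X)=\Sp(V_1)\times\Sp(V_2)=\Hdg(X)$, which is exactly property (D) for $X$. I expect the Goursat step to be the main obstacle: one must exclude a diagonal gluing of the two symplectic groups, and the decisive point is that the vanishing of $\Hom(X_1,X_2)$ translates into non-isomorphism of $V_1$ and $V_2$ as $H$-modules, which together with $\mathrm{Out}(\sp_{2d})=1$ rules such a gluing out.
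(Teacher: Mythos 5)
Your argument is correct, but it is genuinely different in character from the paper's, which disposes of the theorem in one line: part (1) is declared obvious, and part (2) is quoted directly from Hazama [Th.~0.1 and Prop.~1.8] (with Theorem 3.2(i) of Moonen--Zarhin as an alternative reference). In effect you have unpacked those citations: your identification of property (D) with the equality $\Hdg=L$ (Lefschetz group) is the Murty--Hazama characterization of stable nondegeneracy, and your Goursat/no-outer-automorphisms analysis of a subalgebra of $\mathfrak{sp}(V_1)\oplus\mathfrak{sp}(V_2)$ surjecting onto both simple factors is essentially the mechanism behind Hazama's product theorem, specialized to the case at hand. The one step you assert with only a parenthetical sketch --- that (D) for $X_i$ together with $\End^0(X_i)=\Q$ forces $\Hdg(X_i)=\Sp(V_i,\psi_i)$, and conversely that $\Hdg(X)=L(X)$ implies (D) for $X$ --- is exactly the content of the cited results of Murty and Hazama (in particular Murty's theorem that products of divisor classes span the Lefschetz-group invariants), so you are not avoiding the literature so much as localizing the appeal to it. On the plus side, your observation that $\End^0(X_i)=\Q$ forces each $X_i$ to be simple, so that $\Hom(X_2,X_1)=\{0\}$ follows from the single hypothesis $\Hom(X_1,X_2)=\{0\}$, makes precise what the paper leaves inside ``obvious''; and your treatment of the graph case correctly converts an $\Hdg(X)$-isomorphism $V_1\cong V_2$ into a nonzero element of $\Hom(X_1,X_2)\otimes\Q$ via $\Hom(X_1,X_2)\otimes\Q=\Hom_{\Hdg(X)}(\H_1(X_1,\Q),\H_1(X_2,\Q))$. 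For the inner-automorphism step, note that you need it over $\Q$ rather than $\bar{\Q}$: this is fine because every $\Q$-automorphism of $\mathfrak{sp}_{2d}$ is conjugation by an element of $\mathrm{GSp}_{2d}(\Q)$ (Hilbert 90), or alternatively because the standard representation is the unique irreducible of dimension $2d$, so the step is sound either way.
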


\begin{proof}
(i) is obvious. (ii) follows from  \cite[Th. 0.1 and Prop.
1.8]{Hazama} (see also Theorem 3.2(i) of \cite{MZ2}).
\end{proof}

\begin{rem}
Since $\End(X_i)=\Z$ and $X_i$ satisfies (D),
$$\Hdg(X_i)=\Sp(H_1(X_i,\Q)$$
\cite{hazamaT,murty}. (See also \cite[Sect. 1.8]{MZ2}.)
\end{rem}

\begin{lem}
\label{jacprym} Suppose that
$T$ is a subset of $\RR_f$ with $\#(T)\ge 5$. Let us consider the
hyperelliptic curve $C_{f_T}:y^2=f_T(x)$ and its jacobian
$J(C_{f_T})$. Then $\End(J(C_{f_T}))=\Z$ and $\Hdg(J(C_{f_T}))=\Sp(
\H_1(J(C_{f_T}),\Q))$. In addition, $J(C_{f_T})$ satisfies (D).

\end{lem}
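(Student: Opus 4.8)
The plan is to reduce the statement directly to Theorem \ref{jacobian} by exhibiting a subfield of $\C$ over which $f_T$ is defined, irreducible, and carries the full symmetric Galois group. Write $m=\#(T)\ge 5$. The genus of $C_{f_T}$ equals $\left[\frac{m-1}{2}\right]\ge 2$, so the hypothesis $\#(T)\ge 5$ is precisely what places us in the range $g\ge 2$ required by Theorem \ref{jacobian}.

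First I would apply Remark \ref{oneT} to the subset $T$. This produces the subfield $E_0=K(\RR_f)^{\Perm(T)}$, which satisfies $E_0\subset K(\RR_f)\subset\K\subset\C$, together with the properties that $f_T(x)\in E_0[x]$ and that the splitting field of $f_T$ over $E_0$ is $K(\RR_f)$, with $\Gal(E_0(\RR_{f_T})/E_0)=\Perm(T)\cong\ST_m$. Since $\Perm(T)\cong\ST_m$ acts transitively on the $m$-element set $T=\RR_{f_T}$, the polynomial $f_T$ is irreducible over $E_0$. Thus the pair $(E_0,f_T)$, with $m=\deg(f_T)\ge 5$, satisfies all the hypotheses of Theorem \ref{jacobian}, playing the role of $(K,f,n)$.

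Then I would invoke Theorem \ref{jacobian} for $E_0$ and $f_T$. It yields at once $\End(J(C_{f_T}))=\Z$ and $\Hdg(J(C_{f_T}))=\Sp(\H_1(J(C_{f_T}),\Q))$, together with the assertion that every Hodge class on each self-product of $J(C_{f_T})$ is a linear combination of products of divisor classes, which is exactly property (D). The one point to verify is that the endomorphism ring is the intended one: $\End(J(C_{f_T}))$ is taken over the algebraic closure, and since $E_0\subset\K$ with $\K$ algebraically closed in $\C$, the algebraic closure of $E_0$ coincides with $\K$, so passing from $E_0$ to its algebraic closure changes nothing.

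There is essentially no genuine obstacle here: the whole content resides in Theorem \ref{jacobian} and in the Galois-theoretic construction of Remark \ref{oneT}. The only care needed is the bookkeeping of hypotheses — that $E_0$ is a subfield of $\C$, that $\deg(f_T)=m\ge 5$ forces $g\ge 2$, and that the relevant Galois group is the full symmetric group $\ST_m$ rather than a proper subgroup — all of which follow immediately from $\#(T)\ge 5$ and from Remark \ref{oneT}.
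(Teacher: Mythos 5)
Your proposal is correct and follows exactly the paper's own argument: apply Remark \ref{oneT} to obtain the subfield $E_0=K(\RR_f)^{\Perm(T)}$ over which $f_T$ has Galois group $\Perm(T)\cong\ST_m$, then invoke Theorem \ref{jacobian} with $(E_0,f_T,m)$ in place of $(K,f,n)$. The extra checks you record (irreducibility of $f_T$ over $E_0$, $m\ge 5$ forcing $g\ge 2$, and the algebraic closure of $E_0$ being $\K$) are all sound and merely make explicit what the paper leaves implicit.
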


\begin{proof}
Let us put $m=\#(T)$. We have  $m \ge 5$ and $\deg(f_T)=m\ge 5$.

 By Remark \ref{oneT}, there exists a (sub)field
$$E_0\subset K(\RR_f)\subset\K\subset \C$$
such that $f_T(x)\in E_0(T)$ and the Galois group of $f_T(x)$ over
$E_0$ is $\Perm(T)\cong \ST_m$. Now the result follows from Theorem
\ref{jacobian} applied to $m,E_0, f_T(x)$ instead of $n,K,f(x)$.
\end{proof}

\begin{lem}
\label{jacfam} Suppose that
$T$ is a subset of $\RR_f$ with $\#(T)\ge 5$.  Suppose that $m$ is
odd and let $a$ be an arbitrary element of $K$. Let us consider the
hyperelliptic curve $C^{a}_{f_T}:y^2=(x-a)f_T(x)$ and its jacobian
$J(C^{a}_{f_T})$. Then $\End(J(C^{a}_{f_T}))=\Z$ and
$\Hdg(J(C^{a}_{f_T}))=\Sp( \H_1(J(C^{a}_{f_T}),\Q))$. In addition,
 $J(C^{a}_{f_T})$ satisfies (D).

\end{lem}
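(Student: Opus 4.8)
The plan is to reduce this lemma to the odd-degree case already settled by Theorem \ref{jacobian}, using the birational transformation recorded in Remark \ref{odd}. Throughout write $m=\#(T)\ge 5$; by hypothesis $m$ is odd, so $(x-a)f_T(x)$ has even degree $m+1$.

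First I would invoke Remark \ref{oneT} to produce a subfield $E_0\subset K(\RR_f)\subset\K\subset\C$ such that $f_T(x)\in E_0[x]$ and the Galois group of $f_T(x)$ over $E_0$ is $\Perm(T)\cong\ST_m$. Then, since $m$ is odd, I would apply Remark \ref{odd} over $E_0$ to the polynomial $(x-a)f_T(x)$ (taking the subset there to be $T$ and the point to be $a$). This yields a degree $m$ irreducible polynomial $h(x_1)\in E_0[x_1]$ with $E_0(\RR_h)=E_0(\RR_{f_T})$ and $\Gal(E_0(\RR_h)/E_0)=\ST_m$, together with an $E_0$-birational isomorphism between $C^{a}_{f_T}$ and $C_h:y_1^2=h(x_1)$. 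Geometrically this is the standard move of the Weierstrass point $x=a$ to infinity, which turns the even-degree model $y^2=(x-a)f_T(x)$ into an odd-degree model $y_1^2=h(x_1)$ of the same genus $(m-1)/2$ curve; in particular the jacobians $J(C^{a}_{f_T})$ and $J(C_h)$ are isomorphic (over $E_0$, hence over $\K$).

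Finally, since $h$ is irreducible of degree $m\ge 5$ over $E_0$ with $\Gal(h)=\ST_m$, I would apply Theorem \ref{jacobian} to $m,E_0,h(x_1)$ in place of $n,K,f(x)$. This gives $\End(J(C_h))=\Z$, $\Hdg(J(C_h))=\Sp(\H_1(J(C_h),\Q))$, and property (D) for $J(C_h)$. Transporting these three conclusions across the isomorphism $J(C^{a}_{f_T})\cong J(C_h)$ then completes the proof, exactly as in the proof of Corollary \ref{homoGt}.

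I do not anticipate a serious obstacle: the argument is a transparent composition of the substitution in Remark \ref{odd} with Theorem \ref{jacobian}. The only delicate point is the bookkeeping, namely verifying that $h$ is genuinely defined over $E_0$ and retains the full symmetric Galois group $\ST_m$ over $E_0$, and that the birational map is an honest $E_0$-isomorphism of smooth projective curves, so that it induces an isomorphism of jacobians under which the endomorphism ring, the Hodge group, and property (D) are all preserved.
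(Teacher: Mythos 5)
Your proposal is correct and follows essentially the same route as the paper: the paper likewise uses Remark \ref{odd} over the field $E_0$ to replace the even-degree model $y^2=(x-a)f_T(x)$ by an odd-degree model $C_h:y^2=h(x)$ with $h$ irreducible of degree $m\ge 5$ and $\Gal(h)=\ST_m$ over $E_0$, and then concludes via Lemma \ref{jacprym} applied to $m,E_0,h$, which in this situation is just Theorem \ref{jacobian} as you invoke it directly. The bookkeeping points you flag (that $h\in E_0[x]$, that the Galois group stays $\ST_m$, and that the birational isomorphism of smooth projective curves induces an isomorphism of jacobians preserving $\End$, $\Hdg$ and property (D)) are exactly what Remark \ref{odd} supplies.
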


\begin{proof}
By Remark \ref{odd}, there exists a field $$E_0\subset
K(\RR_f)\subset\K\subset\C$$ and a degree $m$ irreducible polynomial
$h(x)\in E_0[x]$ such that
$$E(\RR_h)=E(\RR_{f_T}), \
\Gal(E(\RR_h)/E)=\Gal(E(\RR_{f_T})/E)=\ST_m$$ and $C^{a}_{f_S}$ is
birationally $E$-isomorphic to to the hyperelliptic curve
$C_{h}:y^2=h(x)$. Clearly, the jacobians $J(C^{a}_{f_S})$ and
$J(C_h)$ are isomorphic. It follows from Lemma \ref{jacprym} applied
to $m,E_0,h(x)$ (instead of $n,K,f(x)$) that $\End(J(C_{h}))=\Z$,
$\Hdg(J(C_h)=\Sp( \H_1(J(C_h),\Q))$ and $J(C_{h})$ satisfies (D).
Since $J(C^{a}_{f_S})$ and $J(C_h)$ are isomorphic, we
are done.
\end{proof}

\section{Prym varieties}
\label{hyperprym}

 Following \cite{MumfordP,Dal}, let us give an
explicit description of hyperelliptic prymians $P$, assuming that
$\fchar(K)\ne 2$. Suppose that $n=2g+2\ge 6$  and $$f(x)\in
K[x]\subset \K[x]$$ is a degree $n$ polynomial without multiple
roots. Let us split the $n$-element set $\RR_f$ of roots of $f(x)$
into a {\sl disjoint} union
$$\RR_f =\RR_1 \sqcup \RR_2$$
of {\sl non-empty} sets $\RR_1$ and $\RR_2$ of {\sl even}
cardinalities $n_1$ and $n_2$ respectively.  Further we assume that
$$n_1 \ge n_2\ge 2.$$
 and put
$$f_1(x)=\prod_{\alpha\in\RR_1}(x-\alpha), \
f_2(x)=\prod_{\alpha\in\RR_2}(x-\alpha).$$ We have $n_1+n_2=n$ and
define nonnegative integers $g_1$ and $g_2$ by
$$n_1=2g_1+2, \ n_2=2g_2+2.$$
Clearly,
$$g_1+g_2=g-1.$$
Let us consider the hyperelliptic curves $C_{f_1}:y^2=f_1(x)$ and
$C_{f_2}:y^2=f_2(x)$ of genus $g_1$ and $g_2$ respectively and
corresponding hyperellptic jacobians $J(C_{f_1})$ and $J(C_{f_2})$
of dimension $g_1$ and $g_2$ respectively. Then the prymians $P$ of
$C_f: y^2=f(x)$ are just the products $J(C_{f_1})\times J(C_{f_2})$
for all the partitions $\RR_f =\RR_1 \sqcup \RR_2$.

Now Theorem \ref{main} becomes an immediate corollary of the
following statement.

\begin{thm}
\label{help} Suppose that  $n=2g+2\ge 8$, $K\subset\C$ and
$\Gal(f)=\ST_n$. Let us put $P=J(C_{f_1})\times J(C_{f_2})$. Then:
\begin{itemize}
\item[(i)]
$\Hom(J(C_{f_1}),J(C_{f_2}))=\{0\}, \
\Hom(J(C_{f_2}),J(C_{f_1}))=\{0\}$.
\item[(ii)]
Suppose that $g_i \ge 1$, i.e., $n_i \ge 4$. Then
$$\End(J(C_{f_i}))=\Z, \ \Hdg(J(C_{f_i}))=\Sp(
\H_1(J(C_{f_i}),\Q))$$ and   $J(C_{f_i})$ satisfies (D).
\item[(iii)]
If $n_2=2$ then $P=J(C_{f_1})$. In particular, $\End(P)=\Z$,
$\Hdg(P)=\Sp( \H_1(P,\Q))$ and $P$ satisfies (D).
\item[(iv)]
If $n_2 \ge 4$ then $\End(P)=\Z\oplus\Z$,
$$\Hdg(P)=\Hdg(J(C_{f_1}))\times \Hdg(J(C_{f_2}))=\Sp(
\H_1(J(C_{f_1}),\Q))\times \Sp( \H_1(J(C_{f_2}),\Q))$$ and $P$
satisfies (D).
\end{itemize}
\end{thm}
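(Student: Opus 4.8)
The plan is to establish the four parts in order, using throughout that $K\subset\C$ forces $\fchar(K)=0$, so that every ``supersingular'' alternative appearing in the homomorphism theorems of Section \ref{homohyper} is automatically excluded. Since $\Gal(f)=\ST_n$ is transitive, $f$ is irreducible, so the standing hypotheses of Sections \ref{Galois} and \ref{homohyper} hold with the disjoint subsets $T=\RR_1$ and $S=\RR_2$, for which $f_T=f_1$ and $f_S=f_2$. For part (i) I would apply Theorem \ref{homoG} directly to $T=\RR_1$ and $S=\RR_2$. That theorem offers two alternatives: either both $\Hom$-groups vanish, or $\fchar(K)>0$ and the jacobians are supersingular. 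The second cannot occur since $\fchar(K)=0$, which yields $\Hom(J(C_{f_1}),J(C_{f_2}))=\Hom(J(C_{f_2}),J(C_{f_1}))=\{0\}$.

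For part (ii), fix $i$ with $n_i\ge 4$ and split into two cases according to the size of $\RR_i$. If $n_i\ge 6$ (equivalently $\#(\RR_i)\ge 5$, since $n_i$ is even), then Lemma \ref{jacprym} applied with $T=\RR_i$ delivers all three conclusions at once: $\End(J(C_{f_i}))=\Z$, $\Hdg(J(C_{f_i}))=\Sp(\H_1(J(C_{f_i}),\Q))$, and property (D). The delicate case is $n_i=4$, where $J(C_{f_i})$ is an elliptic curve and Lemma \ref{jacprym} no longer applies, its hypothesis $\#(T)\ge 5$ having failed. Here I would instead invoke Theorem \ref{ellipticZ}, whose numerical hypothesis $n\ge 8$ holds; its two alternatives again collapse, via $\fchar(K)=0$, to $\End(J(C_{f_i}))=\Z$. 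Property (D) and the identification $\Hdg(J(C_{f_i}))=\Sp(\H_1(J(C_{f_i}),\Q))$ then follow from Example \ref{DE}, since an elliptic curve over $\C$ with trivial endomorphism ring is stably nondegenerate with symplectic Hodge group.

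For parts (iii) and (iv) I would assemble the previous parts. If $n_2=2$ then $C_{f_2}$ has genus $0$, so $J(C_{f_2})=\{0\}$ and $P=J(C_{f_1})$; here $n_1=n-2\ge 6$, so the $n_1\ge 6$ case of (ii) transfers all asserted properties to $P=J(C_{f_1})$. If $n_2\ge 4$ then $n_1\ge n_2\ge 4$, so both factors have positive dimension, and (ii) gives $\End(J(C_{f_i}))=\Z$, $\Hdg(J(C_{f_i}))=\Sp(\H_1(J(C_{f_i}),\Q))$, and (D) for $i=1,2$. Combined with the vanishing of the cross-homomorphisms from (i), the hypotheses of Theorem \ref{nonsimple} are met with $X_1=J(C_{f_1})$ and $X_2=J(C_{f_2})$; its conclusions give $\End(P)=\Z\oplus\Z$, that $P$ satisfies (D), and $\Hdg(P)=\Hdg(J(C_{f_1}))\times\Hdg(J(C_{f_2}))$, which equals $\Sp(\H_1(J(C_{f_1}),\Q))\times\Sp(\H_1(J(C_{f_2}),\Q))$ by (ii).

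The one genuine subtlety is the boundary case $n_i=4$ in part (ii): Lemma \ref{jacprym} is stated only for $\#(T)\ge 5$, so for the elliptic factors one must separately rule out complex multiplication (precisely what Theorem \ref{ellipticZ} accomplishes once $\fchar(K)=0$ removes the supersingular escape) and then feed the conclusion $\End=\Z$ into Example \ref{DE} to recover both (D) and the symplectic Hodge group. Everything else is bookkeeping: matching the parities $n_i=2g_i+2$, checking that the numerical hypotheses ($n\ge 8$ for the elliptic-curve theorem, $\#(T)\ge 5$ for the jacobian lemma) are implied by $n=2g+2\ge 8$ together with the case split, and invoking Theorem \ref{nonsimple} for the product.
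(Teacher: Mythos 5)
Your proposal is correct and follows essentially the same route as the paper's own proof: Theorem \ref{homoG} for (i), Lemma \ref{jacprym} for the factors with $n_i\ge 6$ and Theorem \ref{ellipticZ} plus Example \ref{DE} for the elliptic factors with $n_i=4$ in (ii), and Theorem \ref{nonsimple} for (iv). Your explicit remarks that $K\subset\C$ rules out the supersingular alternative and that $n_1=n-2\ge 6$ in case (iii) are details the paper leaves implicit, but the argument is the same.
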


\begin{proof}[Proof of Theorem \ref{help}]
The assertion (i) follows from Theorem \ref{homoG}.

If $n_i\ge 6$ then (ii) follows from Lemma \ref{jacprym}. Suppose
that $n_i=4$. Then $J(C_{f_i})$ is an elliptic curve. It follows
from Theorem \ref{ellipticZ} that $\End(J(C_{f_i}))=\Z$. Now the
assertion about its Hodge group and property (D)  follows from
Example \ref{DE}. This completes the proof of (ii).

Let us prove (iii). If  $n_2=2$  then $J(C_{f_2})=0$ and therefore
$P=J(C_{f_1})$. Now the assertion follows from (ii).

Let us prove (iv). We assume that
$$n_1 \ge n_2 \ge 4.$$
By already proven (i) and (ii),
 $$\End(J(C_{f_1}))=\Z, \ \End(J(C_{f_2}))=\Z,  \
 \Hom(J(C_{f_1}),J(C_{f_2}))=\{0\}.$$
Now (iv) follows from Theorem \ref{nonsimple} applied to
$X_1=J(C_{f_1})$, $X_2=J(C_{f_2})$ and $X=P$.
\end{proof}

Theorem \ref{main1} is an immediate corollary of the following
statement.

\begin{thm}
\label{help1} Suppose that  $n=2g+2\ge 10$, $K\subset\C$ and
$f(x)=(x-a)h(x)$ where $a\in K$ and $h(x)\in K[x]$ is an irreducible
degree $(n-1)$ polynomial with $\Gal(h)=\ST_{n-1}$.  Let us put
$P=J(C_{f_1})\times J(C_{f_2})$. Then:
\begin{itemize}
\item[(i)]
$\Hom(J(C_{f_1}),J(C_{f_2}))=\{0\}, \
\Hom(J(C_{f_2}),J(C_{f_1}))=\{0\}$.
\item[(ii)]
Suppose that $g_i \ge 1$, i.e., $n_i \ge 4$. Then
$$\End(J(C_{f_i}))=\Z, \ \Hdg(J(C_{f_i}))=\Sp(
\H_1(J(C_{f_i}),\Q))$$ and $J(C_{f_i})$ satisfies (D).

\item[(iii)]
If $n_2=2$ then $P=J(C_{f_1})$. In particular, $\End(P)=\Z$,
$\Hdg(P)=\Sp( \H_1(P,\Q))$ and $P$ satisfies (D).

\item[(iv)]
If $n_2 \ge 4$ then $\End(P)=\Z\oplus\Z$,
$$\Hdg(P)=\Hdg(J(C_{f_1}))\times \Hdg(J(C_{f_2}))=\Sp(
\H_1(J(C_{f_1}),\Q))\times \Sp( \H_1(J(C_{f_2}),\Q))$$
 and $P$ satisfies (D).
\end{itemize}
\end{thm}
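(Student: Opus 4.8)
The plan is to follow the proof of Theorem~\ref{help} step by step, adapting each part to accommodate the rational root $a$. The polynomial carrying the full symmetric Galois group is now $h$, of degree $n-1$, with $\Gal(h)=\ST_{n-1}$ acting on the $(n-1)$-element set $\RR_h$ of its roots; since $f(x)=(x-a)h(x)$ has no multiple roots, $\RR_f=\{a\}\sqcup\RR_h$. Accordingly, every result of Sections~\ref{Galois}--\ref{hyperhodge} will be invoked with $h$ in the role of $f$ and $n-1$ in the role of $n$. Given the partition $\RR_f=\RR_1\sqcup\RR_2$ into sets of even cardinalities $n_1\ge n_2\ge 2$, the root $a$ lies in exactly one of them. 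Writing $T_i=\RR_i\cap\RR_h$, the decisive arithmetic point is that the block containing $a$ yields a set $T_i$ of \emph{odd} cardinality $n_i-1$, while the other yields $T_i$ of even cardinality $n_i$; correspondingly $C_{f_i}$ is either $C_{f_{T_i}}$ or the translated curve $C^a_{f_{T_i}}$.

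For (i), one factor is a curve $C_{f_T}$ and the other is $C^a_{f_S}$ with $\#S$ odd, where $T,S\subset\RR_h$ are disjoint. This is exactly the hypothesis of Corollary~\ref{homoGt} applied to $h$, which gives that either both Hom-groups vanish or $\fchar(K)>0$ with both jacobians supersingular. Since $K\subset\C$ forces $\fchar(K)=0$, the supersingular alternative is impossible and (i) follows. (If $n_2=2$ one factor is trivial and (i) is automatic.)

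For (ii) I would run a four-fold case analysis on whether $a\in\RR_i$ and on $n_i=4$ versus $n_i\ge 6$. If $a\notin\RR_i$, then $\#T_i=n_i$: for $n_i\ge 6$ apply Lemma~\ref{jacprym}, and for $n_i=4$ apply Theorem~\ref{ellipticZ} to get $\End=\Z$ and then Example~\ref{DE} for the Hodge group and property~(D). If $a\in\RR_i$, then $\#T_i=n_i-1$ is odd: for $n_i\ge 6$ (so $\#T_i\ge 5$) apply Lemma~\ref{jacfam}, and for $n_i=4$ (so $\#T_i=3$) apply Theorem~\ref{ellipticZ1} followed by Example~\ref{DE}. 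In both elliptic subcases the input is that the ambient polynomial $h$ has degree $n-1\ge 9$, which exceeds the thresholds $8$ and $6$ demanded by Theorems~\ref{ellipticZ} and~\ref{ellipticZ1}. This is precisely where the hypothesis $n\ge 10$ enters, and it is the one feature genuinely distinguishing this theorem from Theorem~\ref{help}, where $\Gal(f)=\ST_n$ lets one work with a polynomial of degree $n$ rather than $n-1$.

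Parts (iii) and (iv) then go through as in Theorem~\ref{help}. If $n_2=2$ the smaller curve has genus zero, so $J(C_{f_2})=\{0\}$, $P=J(C_{f_1})$, and (ii) applied to the remaining factor (of dimension $g_1=g-1\ge 3$, since $n_1=n-2\ge 8$) gives the conclusion. If $n_2\ge 4$, parts (i) and (ii) supply $\End(J(C_{f_i}))=\Z$ and $\Hom(J(C_{f_1}),J(C_{f_2}))=\{0\}$, and Theorem~\ref{nonsimple} then delivers $\End(P)=\Z\oplus\Z$, the product decomposition of $\Hdg(P)$ into symplectic groups, and property~(D). I expect the only real obstacle to be the bookkeeping in (ii): one must keep straight the parity of $\#T_i$ forced by $a$ and check the numerical hypotheses of the elliptic-curve results. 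No idea beyond the tools already assembled is needed.
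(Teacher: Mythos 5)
Your proposal is correct and follows essentially the same route as the paper's own proof: the same reduction to $h$ and $\RR_h$, the same case split on which block contains $a$ and on $n_i=4$ versus $n_i\ge 6$, invoking Corollary~\ref{homoGt}, Lemmas~\ref{jacprym} and~\ref{jacfam}, Theorems~\ref{ellipticZ} and~\ref{ellipticZ1}, Example~\ref{DE}, and Theorem~\ref{nonsimple} exactly where the paper does. The numerical checks (degree $n-1\ge 9$ against the thresholds $8$ and $6$) are also the ones the paper makes.
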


\begin{proof}
Clearly, $\RR_f=\RR_h \sqcup\{a\}$; in particular, $a$ belongs to
precisely one of  $\RR_1$ and $\RR_2$.  Suppose that $a$ lies in
$\RR_j$ and does {\sl not} belong to $\RR_k$ and put
$$T=\RR_k\subset\RR_h, \ S=\RR_j\setminus \{a\}\subset\RR_h.$$ Now the assertion (i) follows from
Corollary \ref{homoGt} applied to  $h(x)$ (instead of $f(x)$).

If $n_k\ge 6$ then the assertion (ii) for $J(C_{f_k})$ follows from
Lemma \ref{jacprym}. Suppose that $n_k=4$, i.e.,  $J(C_{f_k})$ is an
elliptic curve. Then it follows from Theorem \ref{ellipticZ} applied
to $m=n-1\ge 9$ and $h(x)$ (instead of $f(x)$) that
$\End(J(C_{f_k}))=\Z$. Now the assertion about its Hodge group and
property (D) follows from Example \ref{DE}.

 If $n_j\ge 6$ then the assertion (ii) for $J(C_{f_j})$ follows from Lemma \ref{jacfam}.
Suppose that $n_j=4$, i.e.,  $J(C_{f_j})$ is an elliptic curve. Then
it follows from Theorem \ref{ellipticZ1} applied to $m=n-1$ and
$h(x)$ (instead of $f(x)$) that $\End(J(C_{f_j}))=\Z$. Now the
assertion about its Hodge group and property (D) follows from
Example \ref{DE}. This ends the proof of (ii).

The proof of the remaining assertions (iii) and (iv) goes literally
as the proof of the corresponding assertions of Theorem \ref{help}.
\end{proof}

\begin{exs}
Let us take $K=\Q$ and $f_n(x)=x^n-x-1$. It is known \cite[p.
42]{SerreGalois} that $\Gal(f_n)=\ST_n$. Let $a$ be a rational
number. Suppose that $n=2g+2$ and let us consider the hyperelliptic
genus $g$ curves $C_{f_n}: y^2=f_n(x)$ and $C^{a}_{f_{n-1}}:
y^2=(x-a) f_{n-1}(x)$. Then:

\begin{itemize}
\item[(i)]
If $n=2g+2\ge 8$ then all $(2^{2g}-1)$ Prym varieties $P$ of
$C_{f_{n}}$ satisfy (D). Among them there are exactly $n(n-1)/2$
complex abelian varieties with $\End(P)=\Z$; for all others
$\End(P)=\Z\oplus \Z$.
\item[(ii)]
If $n=2g+2\ge 10$ then all $(2^{2g}-1)$ Prym varieties $P$ of
$C^{a}_{f_{n-1}}$ satisfy (D). Among them there are exactly
$n(n-1)/2$ complex abelian varieties with $\End(P)=\Z$; for all
others $\End(P)=\Z\oplus \Z$.
\end{itemize}
\end{exs}

\begin{ex}
Let $z_1, \dots , z_n$ be algebraically independent (transcendental)
complex numbers and $L=\Q(z_1, \dots , z_n)\subset \C$ the
corresponding subfield of $\C$, which is isomorphic to the field of
rational functions in $n$ variables over $\Q$. Let $K\subset L$ be
the (sub)field of symmetric rational functions. Then
 $$f(x)=\prod_{i=1}^n (x-z_i)\in K[x], \ \RR_f=\{z_1, \dots , z_n\}, \
 \Gal(f)=\ST_n.$$
Suppose that $n=2g+2$ and let us consider the hyperelliptic genus
$g$ curve $C_f:y^2=f(x)$.

If $g\ge 3$ (i.e., $n\ge 8$) then all $(2^{2g}-1)$ Prym varieties
$P$ of $C_{f}$ satisfy (D). Among them there are exactly $n(n-1)/2$
complex abelian varieties with $\End(P)=\Z$; for all others
$\End(P)=\Z\oplus \Z$.

If $n=6$ (i.e., $g=2$) then all fifteen Prym varieties $P$ are
elliptic curves $y^2=\prod_{z\in T}(x-z)$ where $T$ is a $4$-element
subset of $\{z_1, \dots , z_6\}$. The algebraic independence of
$z_1, \dots , z_6$ implies that the $j$-invariants of these elliptic
curves are transcendental numbers and therefore all $P$ have no
complex multiplication, i.e., $\End(P)=\Z$.

\end{ex}

\begin{rem}
The property (D) and equality $\End(P)=\Z$  for {\sl general} (not
necessarily unramified) Prym varieties $P$ of arbitrary smooth
projective curves were proven in \cite{Biswas}.
\end{rem}


\begin{thebibliography}{99}

\bibitem{Biswas} I. Biswas, K. H. Paranjape, {\sl The Hodge
conjecture for general Prym varieties}. J. Algebraic Geometry {\bf
11} (2002), 33--39.

\bibitem{Dal}  S. G. Dalaljan,
{\sl The Prym variety of an unramified double covering of a
hyperelliptic curve}. (Russian) Uspehi Mat. Nauk 29 (1974), no.
6(180), 165--166. MR0404270 (53 \#8073).

\bibitem{Deligne} P.~Deligne, {\sl Hodge cycles on abelian varieties} (notes by
J.\/S.~Milne). Lecture Notes in Math., vol. {\bf 900} (Springer-Verlag, 1982),
pp. 9--100.

\bibitem{Deuring} M. Deuring,  {\sl Die Typen der Multiplikatorenringe elliptischer
Funktionenk\"orper}.  Abh. Math. Sem. Hansischen Univ. {\bf 14}
 (1941), 197–-272. MR0005125 (3,104f).

\bibitem{hazamaT} F. Hazama,
{\sl Algebraic cycles on certain abelian varieties and powers of
special surfaces}. J. Fac. Sci. Univ. Tokyo Sect. IA Math. {\bf 31}
(1985), no. 3, 487--520.

\bibitem{Hazama} F. Hazama, {\sl Algebraic cycles on nonsimple
abelian varieties}. Duke Math. J. {\bf 58} (1989), 31--37.

\bibitem{Knapp} A. Knapp, Elliptic curves. Princeton University
Press, Princeton, 1992.

\bibitem{Lang} S. Lang, Elliptic functions, Second edition. Springer
Verlag, New York, 1987.




\bibitem{MZ2} B. Moonen, Yu. G. Zarhin, {\sl Hodge classed on
abelian varieties of low dimension}. Math. Ann. {\bf 315} (1999),
711--733.

\bibitem{MumfordSh} D. Mumford,  {\sl A note of Shimura's paper ``Discontinuous groups and
abelian varieties''}.  Math. Ann.  {\bf 181}  (1969), 345–-351.



\bibitem{MumfordP} D. Mumford, {\sl Prym varieties} I. In:
Contributions to Analysis, pp. 325--350, Academic Press, 1974;
Selected Papers, vol. I, pp. 545--570, Springer Verlag, New York,
2004.

\bibitem{murty} V. Kumar Murty,
{\sl Exceptional Hodge classes on certain abelian varieties}. Math.
Ann. 268 (1984), no. 2, 197--206.

\bibitem{Oort} F. Oort,
{\sl The isogeny class of a CM-type abelian variety is defined over
a finite extension of the prime field}. J. Pure Appl. Algebra {\bf
3} (1973), 399--408.

\bibitem{Ribet} K. Ribet, {\sl Hodge classes on certain abelian varieties}.  Amer. J. Math. {\bf 105} (1983), 523--538.


\bibitem{SerreGalois} J.-P. Seree, Topics in Galois Theory. Jones and Bartlett Publishers, Boston-London,
1992.








\bibitem{Shimura} G. Shimura, Introduction to the arithmetic theory of automorphic functions.  Publ. Math. Soc. Japan {\bf 11},
Iwanami Shoten and Princeton University Press,  Princeton, 1971.


\bibitem{Tate} J. Tate, {\sl Algebraic formulas in arbitrary
characteristic}. Appendix 1 to \cite{Lang}. pp. 299--306.


\bibitem{ZarhinIzv} Yu.G. Zarhin, {\sl Weights of simple Lie algebras in the cohomology of
algebraic varieties}. Izv. Akad. Nauk SSSR Ser. Mat. {\bf 48} (1984), 264--304;
 Math. USSR Izv. {\bf 24} (1985), 245 - 281.




\bibitem{ZarhinMRL} Yu.G. Zarhin, {\sl Hyperelliptic jacobians without complex multiplication}.
 Math. Res. Letters {\bf 7} (2000), 123--132.




\bibitem{ZarhinMMJ} Yu.G. Zarhin, {\sl Very simple $2$-adic representations
 and hyperelliptic jacobians}. Moscow Math. J.
  {\bf 2} (2002), issue 2, 403-431.





\bibitem{ZarhinSh}  Yu.G. Zarhin, {\sl Homomorphisms of
hyperelliptic Jacobians}.  In: Number Theory, Algebra and Algebraic Geometry
(Shafarevich Festschrift). Trudy Mat. Inst. Steklov {\bf 241} (2003), 90--104;
Proc. Steklov Inst. Math. {\bf 241}  (2003), 79--92.

\bibitem{ZarhinBSMF} Yu.G. Zarhin,  {\sl Non-supersingular hyperelliptic Jacobians}.
Bull. Soc. Math. France {\bf 132} (2004), no. 4, 617--634.

\bibitem{ZarhinL} Yu.G. Zarhin,  {\sl Homomorphisms of abelian varieties}.
In: Y. Aubry, G. Lachaud (ed.),
 Arithmetic, Geometry and Coding Theory (AGCT 2003),
 S\'eminaires et Congr\'es {\bf 11} (2005), 189--215.

 \bibitem{ZarhinPLMS2} Yu.G. Zarhin,  {\sl Families of absolutely simple hyperelliptic Jacobians}.  Proc. Lond. Math. Soc. (3)  {\bf 100}  (2010),  no. 1, 24--54,



\end{thebibliography}
\end{document}